\numberwithin{equation}{section}
\newtheorem{theorem}{Theorem}[section]
\newtheorem{coro}[theorem]{Corollary}
\newtheorem{lemma}[theorem]{Lemma}
\newcommand{\abs}[1]{\lvert#1\rvert}
\begin{document}


\title{Rogers--Ramanujan type identities and Chebyshev
Polynomials of the third kind}

\author{Lisa H. Sun}

\address{Center for Combinatorics, LPMC,
Nankai University, Tianjin 300071, P.R. China}

\email{sunhui@nankai.edu.cn}

\subjclass[2010]{05A19, 33D15}

\begin{abstract}
It is known that $q$-orthogonal polynomials play an important role in the field of $q$-series and special functions. During studying Dyson's ``favorite" identity of Rogers--Ramanujan type, Andrews pointed out that the classical orthogonal polynomials also have surprising applications in the world of $q$. By inserting Chebyshev polynomials of the third  and the fourth kinds into Bailey pairs, Andrews derived a family of Rogers--Ramanujan type identities and also results related to mock theta functions and Hecke--type series. In this paper, by constructing a new Bailey pair involving Chebyshev polynomials of the third  kind, we further extend Andrews' way  in the studying of Rogers--Ramanujan type identities. By fitting this Bailey pair into different weak forms of Bailey's lemma, we obtain a companion identity to Dyson's favorite one and also many other Rogers--Ramanujan type identities. Furthermore, as immediate consequences, we also obtain some results related to Appell--Lerch series and the  generalized Hecke--type series.

\noindent
\textbf{Keywords.}
Rogers--Ramanujan type identities, Dyson's favorite identity, Bailey's Lemma, Chebyshev polynomials, Appell--Lerch series, Hecke--type series

\end{abstract}

\maketitle

\allowdisplaybreaks

\section{Introduction}\label{sec1}

 Freeman Dyson, in his article, A Walk Through Ramanujan's Garden \cite{Dys88},
describes his study of Rogers--Ramanujan type identities during the dark days of World War II. Among these identities  he found  his favorite one as follows
\begin{equation}\label{Dysonf}
\sum_{n\geq 0} \frac{q^{n^2+n}\prod_{j=1}^n(1+q^{j}+q^{2j})}{(q;q)_{2n+1}}=
\prod_{n=1}^\infty \frac{(1-q^{9n})}{(1-q^n)}.
\end{equation}
Dyson's proof of \eqref{Dysonf} and the proof subsequently provided
by Slater \cite[p. 161]{Sla52} are based on what has become known as Bailey's
Lemma \cite{Bai49}.

In the treatment of $q$-series, the $q$-orthogonal polynomials have been successfully applied to study different problems, especially to Rogers--Ramanujan type identities,  see, for example  \cite{And10, And12, AA77, ChFe13, IZ15}. Recently,  Andrews \cite{And19} pointed out that the classical orthogonal polynomials also could enter naturally into the world of $q$.

Denote the $n$th classical Chebyshev polynomials of the third kind by $V_n(x)$. By verifying the following identity \cite[Theorem 3.1]{And19} involving $V_n(x)$,
\begin{equation}\label{Andrewidentity}
\prod_{j=1}^n (1+2xq^j+q^{2j})=\sum_{j=0}^n q^{j+1\choose 2} V_j(x) {2n+1\brack n-j},
\end{equation}
Andrews obtained a Bailey pair
\[
\left(\frac{q^{n+1\choose 2}V_n(x)}{1-q}, \frac{\prod_{j=1}^n (1+2xq^j+q^{2j})}{(q;q)_{2n+1}}\right)
\]
at $a=q$. Then by fitting the above Bailey pair into a weak form of Bailey's lemma at $a=q$, Andrews \cite[(4.2)]{And19} derived the following generalization of Dyson's favorite identity \eqref{Dysonf}
\begin{equation}\label{AndV}
\sum_{n\geq 0} \frac{q^{n^2+n}\prod_{j=1}^n(1+2xq^{j}+q^{2j})}{(q;q)_{2n+1}}=
\frac{1}{(q;q)_\infty} \sum_{n\geq 0}q^{3{n+1\choose 2}}V_n(x),
\end{equation}
which reduces to many Rogers--Ramanujan type identities.


In this paper, we further apply Chebyshev polynomials of the third kind  to study a companion identity of Dyson's favorite one \eqref{Dysonf}
\begin{equation}\label{Entry534}
\sum_{n\geq 0} \frac{q^{2n^2}\prod_{i=1}^n (1+q^{2i-1}+q^{4i-2})}{(q^2;q^2)_{2n}}
=\frac{(q,q^5,q^6;q^6)_\infty(q^9;q^{18})_\infty}{(q;q)_\infty},
\end{equation}
which can be found in Ramanujan's lost notebook \cite[p.103, Entry 5.3.4]{AB09}.

By using Chebyshev polynomials of the third kind, we show that
\begin{equation}\label{kbaileypair}
\left(q^{n^2}\Big(V_n(x)+V_{n-1}(x)\Big),\frac{\prod_{j=1}^n (1+2xq^{2j-1}+q^{4j-2})}{(q^2;q^2)_{2n}}  \right)
\end{equation}
form a Bailey pair at $a=1$. Based on Bailey's Lemma \cite[p. 3, eq. (3.1)]{Bai49}, we obtain a generalization of \eqref{Entry534}  which reduces to several Rogers--Ramanujan type identities.

\begin{theorem}\label{mainthm} We have
\begin{equation}\label{maini}
\sum_{n\geq 0} \frac{q^{2n^2}\prod_{j=1}^n(1+2xq^{2j-1}+q^{4j-2})}{(q^2;q^2)_{2n}}=
\frac{1}{(q^2;q^2)_\infty} \sum_{n\geq 0}q^{3n^2}(V_n(x)+V_{n-1}(x)).
\end{equation}
\end{theorem}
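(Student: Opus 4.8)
The plan is to establish \eqref{maini} by verifying that the pair in \eqref{kbaileypair} is genuinely a Bailey pair at $a=1$, and then feeding it into Bailey's Lemma in the form used for \eqref{AndV}. Recall that a pair of sequences $(\alpha_n,\beta_n)$ is a Bailey pair relative to $a$ if $\beta_n=\sum_{r=0}^n \frac{\alpha_r}{(q;q)_{n-r}(aq;q)_{n+r}}$. Here the relevant base is $q^2$ rather than $q$, so I would work with the substitution $q\mapsto q^2$ throughout and set $a=1$; the defining relation to check becomes
\begin{equation*}
\frac{\prod_{j=1}^n(1+2xq^{2j-1}+q^{4j-2})}{(q^2;q^2)_{2n}}
=\sum_{r=0}^n \frac{q^{2r^2}\bigl(V_r(x)+V_{r-1}(x)\bigr)}{(q^2;q^2)_{n-r}(q^2;q^2)_{n+r}}.
\end{equation*}
The natural tool for this is Andrews' polynomial identity \eqref{Andrewidentity}, which already expands the product $\prod_{j=1}^n(1+2xq^j+q^{2j})$ in terms of the $V_j(x)$ against Gaussian binomials. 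The first step, therefore, is to replace $q$ by $q^2$ in \eqref{Andrewidentity} and also absorb the shift of the argument from $q^j$ to $q^{2j-1}$, so that the left-hand product matches the one appearing in \eqref{kbaileypair}.

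After obtaining the correct expansion of $\prod_{j=1}^n(1+2xq^{2j-1}+q^{4j-2})$ in terms of $V_j(x)$, the next step is to divide by $(q^2;q^2)_{2n}$ and rewrite the resulting Gaussian binomials ${2n+1\brack n-j}$ (now in base $q^2$) as the quotient of Pochhammer symbols appearing in the Bailey-pair definition. The combination $V_n(x)+V_{n-1}(x)$ in the $\alpha$-sequence should emerge naturally here: since each $V_j(x)$ picks up a binomial coefficient, collecting the $V_n$ term from the index $n$ together with the $V_{n-1}$ term (which reindexes into the same slot after simplifying the binomial ratios) ought to produce exactly $V_n(x)+V_{n-1}(x)$ with the prefactor $q^{2n^2}$. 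This bookkeeping, reconciling the two binomial contributions and confirming that the power of $q$ collapses to $q^{2n^2}$, is where the main effort lies.

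Once the Bailey pair is confirmed, the proof of the theorem is quick. I would apply Bailey's Lemma \cite[p.~3, eq.~(3.1)]{Bai49} in its weak form at $a=1$ (base $q^2$), which multiplies $\beta_n$ by $q^{2n^2}$ and sums, yielding on the one hand the left-hand side of \eqref{maini} and on the other hand
\begin{equation*}
\frac{1}{(q^2;q^2)_\infty}\sum_{n\geq 0} q^{2n^2}\cdot q^{n^2}\bigl(V_n(x)+V_{n-1}(x)\bigr)
=\frac{1}{(q^2;q^2)_\infty}\sum_{n\geq 0} q^{3n^2}\bigl(V_n(x)+V_{n-1}(x)\bigr),
\end{equation*}
exactly as claimed. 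The main obstacle I anticipate is the purely algebraic step of transforming Andrews' identity \eqref{Andrewidentity} under $q\mapsto q^2$ and the argument shift into a clean expansion whose coefficients telescope into $V_n(x)+V_{n-1}(x)$; the Bailey-machinery at the end is routine once the pair is in hand.
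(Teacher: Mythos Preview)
Your overall architecture is right and matches the paper: establish the Bailey pair \eqref{kbaileypair} (with base $q^2$, $a=1$) and then apply the weak form \eqref{baileypair1} of Bailey's lemma with $q\to q^2$; the final computation $q^{2n^2}\cdot q^{n^2}=q^{3n^2}$ is exactly how the paper obtains \eqref{maini}.

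The gap is in your first step. You propose to obtain the required polynomial identity
\[
\prod_{j=1}^n(1+2xq^{2j-1}+q^{4j-2})=\sum_{j=0}^n q^{j^2}\begin{bmatrix}2n\\ n-j\end{bmatrix}_{q^2}\bigl(V_j(x)+V_{j-1}(x)\bigr)
\]
by substituting $q\mapsto q^2$ in Andrews' identity \eqref{Andrewidentity} and then ``absorbing the shift'' from $q^{2j}$ to $q^{2j-1}$. That shift is not a substitution: after $q\mapsto q^2$, Andrews' identity expands $\prod_j(1+2xq^{2j}+q^{4j})$, and there is no change of variable in $q$ or $x$ that converts this product into $\prod_j(1+2xq^{2j-1}+q^{4j-2})$. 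Moreover, Andrews' binomial is $\begin{bmatrix}2n+1\\ n-j\end{bmatrix}$, which after division corresponds to a Bailey pair at $a=q$, whereas the pair you need is at $a=1$ with $\begin{bmatrix}2n\\ n-j\end{bmatrix}_{q^2}$; these do not telescope into one another by reindexing. So the mechanism by which you expect $V_j(x)+V_{j-1}(x)$ to ``emerge naturally'' is not actually there. (Incidentally, the Bailey relation you display should have $q^{r^2}$, not $q^{2r^2}$, consistent with your own final line.)

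The paper does \emph{not} derive the key identity from \eqref{Andrewidentity}. Instead it proves it directly (Theorem~\ref{basici}): setting $v_j(x)=V_j(x)+V_{j-1}(x)$, one checks that $\{v_j(x)\}$ is a polynomial basis satisfying $2x\,v_j=v_{j+1}+v_{j-1}$, and then verifies by induction on $n$ that the right-hand side obeys the same first-order recurrence $R_n(x)=(1+2xq^{2n-1}+q^{4n-2})R_{n-1}(x)$ as the product. Comparing coefficients of $v_j(x)$ reduces this to a routine identity among $q$-binomials. Once that polynomial identity is in hand, division by $(q^2;q^2)_{2n}$ gives the Bailey pair and your final paragraph goes through verbatim. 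In short: keep your endgame, but replace the attempted reduction to \eqref{Andrewidentity} with a direct recurrence argument for the finite identity.
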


Moreover, by fitting our key Bailey pair \eqref{kbaileypair} into different weak forms of Bailey's lemma, we are led to other identities in the similar form as \eqref{maini}. As their consequences, we obtain more Rogers--Ramanujan type identities and also some results involving Appell--Lerch series and the generalized Hecke--type series.

This paper is organized as follows. In Section 2, we recall some basic definitions and properties of Chebyshev polynomials of the third kind, Bailey pairs  and Bailey's lemma. In Section 3, we devote to construct our key Bailey pair involving  Chebyshev polynomials of the third kind. In Section 4, by fitting the Bailey pair into a weak form of Bailey's lemma, we derive \eqref{maini}. We also give the detailed procedures to obtain the companion identity \eqref{Entry534} when $x$ is taken to be $\frac{1}{2}$.  In Section 5, we consider another two weak forms of Bailey's lemma, from which we obtain more Rogers--Ramanujan type identities. In Section 6, by using Bailey's lemma, we restrict our  attention to results where Chebyshev polynomials have been inserted into the generalized Hecke--type series. In Section 7, we  study some of the Apell--Lerch series arising as immediate consequences of our main results. At last, in Section 8, we show the connection between our work and Andrews' result \eqref{Andrewidentity}, which leads to an identity on $q$-binomial coefficients by applying the orthogonality of Chebyshev polynomials of the third kind.

\section{Chebyshev polynomials and Bailey's lemma}

Throughout this paper, we adopt  standard notations and terminologies
for $q$-series \cite{GR04}. We assume that $\abs{q}<1$.
The $q$-shifted factorial is defined by
\[
(a;q)_n=\begin{cases}
1, & \text{\it if $n=0$}, \\[2mm]
(1-a)(1-aq)\cdots(1-aq^{n-1}), & \text{\it if $n\geq 1$}.
\end{cases}
\]
We also use the notation
\[
(a;q)_\infty=\prod_{n=0}^\infty (1-aq^n).
\]
There are more compact notations for the multiple $q$-shifted factorials:
\begin{align*}
(a_1,a_2,\dots,a_m;q)_n&=(a_1;q)_n(a_2;q)_n \cdots(a_m;q)_n,\\
(a_1,a_2,\dots,a_m;q)_{\infty}&=(a_1;q)_{\infty}(a_2;q)_{\infty}\cdots
(a_m;q)_{\infty}.
\end{align*}
The $q$-binomial coefficients, or Gaussian polynomials are given by
\[
{n \brack k} =
 \left\{
  \begin{array}{ll}
  0, & \hbox{if $k<0$ or $k>n$},\\
  \dfrac{(q;q)_n}
{(q;q)_k(q;q)_{n-k}}, & \hbox{otherwise.}
                        \end{array}
                      \right.
\]
We also denote the case when $q\to q^\ell$ by ${n\brack k}_{q^{\ell}}$.

It is known that $q$-orthogonal polynomials play an important role in the study of Rogers--Ramanujan type identities. In \cite{And19}, Andrews  pointed out that the classical orthogonal polynomials also can be naturally applied in the study of $q$-series.
Recall that the Chebyshev polynomial of the first kind is defined by
\[
T_n(x)=\cos n\theta,
\]
where $x=\cos \theta$. By combining the trigonometric identities, it is direct to derive that $T_n(x)$ satisfies the fundamental recurrence relation 
\[
T_n(x)=2xT_{n-1}(x)-T_{n-2}(x),
\]
for $n>1$ with the initial conditions
\[
T_0(x)=1, \ T_1(x)=x.
\]
Moreover, the Chebyshev polynomial of the third kind $V_n(x)$ is given by
\[
V_n(x)=\frac{\cos (n+\frac{1}{2})\theta}{\cos \frac{1}{2}\theta}
\]
which can be determined by
\begin{equation}\label{recV}
V_n(x)=2xV_{n-1}(x)-V_{n-2}(x),
\end{equation}
for $n>1$ together with the initial conditions $V_0(x)=1$ and $V_1(x)= 2x-1$. For  convenience, we also set   $V_{n}(x)=0$ for $n< 0$. These two kinds of Chebyshev polynomials are closely related. To be more precisely, we have for $n\geq 1$
\begin{equation} \label{CTV}
2T_n(x)=V_n(x)+V_{n-1}(x).
\end{equation}
Equivalently, we will present our results in terms of $V_n(x)$ in this paper.

In \cite[Lemma 4.1]{And19}, Andrews  stated some special values of $V_n(x)$ which can be easily derived by using the mathematical  induction based on the above recurrence relation \eqref{recV}.

\begin{lemma}\label{specialv} For $n\geq 0$,
\begin{subequations}
\begin{align}
V_n(-1)&=(-1)^n (2n+1), \label{V-1}\\[5pt]
V_n\Big(-\frac{1}{2}\Big)&=\left\{
                    \begin{array}{ll}
                      -2, & \text{if $n\equiv 1 \pmod{3}$,} \\
                      1, & \text{otherwise,}
                    \end{array}
                  \right.\label{V-12}\\[5pt]
V_n(0)&=\left\{
                    \begin{array}{ll}
                      1, & \text{if $n\equiv 0,3 \pmod{4}$,} \\
                      -1, & \text{otherwise,}
                    \end{array}
                  \right.\label{V-0}\\[5pt]
V_n\Big(\frac{1}{2}\Big)&=\left\{
                    \begin{array}{ll}
                      1, & \text{if $n\equiv 0,5 \pmod{6}$,} \\
 0, & \hbox{if $n\equiv 1,4 \pmod{6}$,} \\[5pt]
                      -1, & \text{if $n\equiv 2,3 \pmod{6}$,}
                    \end{array}
                  \right. \label{V12}\\[5pt]
V_n(1)&=1,\label{V1}\\[5pt]
V_n\Big(\frac{3}{2}\Big)&=F_{2n+1},\label{V23} \\[5pt]
V_n\Big(-\frac{3}{2}\Big)&=(-1)^nL_{2n+1},\label{V-23}
\end{align}
where $F_n$ and $L_n$ are the Fibonacci and Lucas numbers which are defined by the recurrence relations
\begin{align*}
 F_n&=F_{n-1}+F_{n-2},\\[5pt]
 L_n &= L_{n-1} + L_{n-2}
\end{align*}
for $n>1$ combined with the initial values $F_0=0, F_1=1$ and $L_0=2, L_1=1$, respectively.
\end{subequations}
\end{lemma}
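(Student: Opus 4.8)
The plan is to prove all seven evaluations by induction on $n$, using the single recurrence \eqref{recV} together with the initial data $V_0(x)=1$ and $V_1(x)=2x-1$. The key observation is that fixing $x=x_0$ turns \eqref{recV} into a constant-coefficient two-term recurrence $V_n(x_0)=2x_0\,V_{n-1}(x_0)-V_{n-2}(x_0)$, so in each case it suffices to check that the proposed closed form satisfies this recurrence and agrees with $V_0$ and $V_1$. I would organize the seven identities into three groups according to the behavior of this recurrence.

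First I would treat the two ``smooth'' cases \eqref{V-1} and \eqref{V1}. At $x=1$ the recurrence reads $V_n(1)=2V_{n-1}(1)-V_{n-2}(1)$, and since $V_0(1)=V_1(1)=1$ the constant sequence $1$ is immediately fixed, giving \eqref{V1}. At $x=-1$ the recurrence is $V_n(-1)=-2V_{n-1}(-1)-V_{n-2}(-1)$; substituting the ansatz $(-1)^n(2n+1)$ and simplifying reduces to the arithmetic identity $2(2n-1)-(2n-3)=2n+1$, which verifies \eqref{V-1} once the base cases $n=0,1$ are checked.

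Next I would handle the three periodic evaluations \eqref{V-12}, \eqref{V-0} and \eqref{V12}, where $2x_0=-1,0,1$ respectively. Here the proposed values are purely periodic with periods $3$, $4$ and $6$, so I would compute $V_n(x_0)$ explicitly for $n$ ranging over one full period (taking $V_{-1}(x_0)=0$ as declared), verify that these agree with the claimed table, and then observe that the two immediately preceding values determine each later one through the recurrence; induction on the residue of $n$ modulo the period then closes the argument. The only care needed is the bookkeeping: one must list enough base cases to cover an entire period before periodicity can be invoked.

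Finally, the cases \eqref{V23} and \eqref{V-23} are the most interesting, since they tie $V_n$ to the Fibonacci and Lucas numbers. At $x=3/2$ the recurrence becomes $V_n(3/2)=3V_{n-1}(3/2)-V_{n-2}(3/2)$, so the claim $V_n(3/2)=F_{2n+1}$ hinges on the ``bisection'' identity $F_{2n+1}=3F_{2n-1}-F_{2n-3}$; I would derive this from $F_m=F_{m-1}+F_{m-2}$ by expanding $F_{2n+1}=F_{2n}+F_{2n-1}=2F_{2n-1}+F_{2n-2}$ and using $F_{2n-2}=F_{2n-1}-F_{2n-3}$. Since the Lucas numbers obey the same defining recurrence, the identical computation yields $L_{2n+1}=3L_{2n-1}-L_{2n-3}$, and at $x=-3/2$ the recurrence $V_n(-3/2)=-3V_{n-1}(-3/2)-V_{n-2}(-3/2)$ combined with the sign factor $(-1)^n$ matches this, giving \eqref{V-23}. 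I expect the main (mild) obstacle to be exactly this last step---recognizing that the odd-index bisection of the Fibonacci and Lucas sequences is governed by the coefficient $2x_0=\pm 3$---after which the base cases $V_0(\pm 3/2)=F_1=L_1=1$, $V_1(3/2)=F_3=2$ and $V_1(-3/2)=-L_3=-4$ complete the induction.
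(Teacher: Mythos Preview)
Your proposal is correct and follows exactly the approach the paper indicates: the paper does not give a detailed proof but simply remarks that these special values ``can be easily derived by using the mathematical induction based on the above recurrence relation \eqref{recV},'' citing Andrews~\cite[Lemma~4.1]{And19}. Your write-up carries out precisely this induction, with the appropriate case-by-case verification of initial values and, for \eqref{V23}--\eqref{V-23}, the bisection identity for Fibonacci/Lucas numbers, so nothing is missing.
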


A popular method to prove identities of Rogers--Ramanujan type is based
on Bailey's lemma, see \cite{And86,Bai49, JS17, War01}.
During studying  Rogers' work on Ramanujan's identities  \cite{AB05, AB09,  Rog94, Rog17}, Bailey \cite{Bai49} discovered the underlying mechanism which was named ``Bailey transform".   The most famous specialization of Bailey transformation now known as ``Bailey pair" which is given as a pair of sequence of rational functions $(\alpha_n(a,q), \beta_n(a,q))_{n\geq 0}$  with respect to $a$ such that
\begin{equation}\label{Baileyp}
\beta_n(a,q)=\sum_{j=0}^n \frac{\alpha_j(a,q)}{(q;q)_{n-j}(aq;q)_{n+j}}.
\end{equation}
In this paper, our key Bailey pair is obtained by fitting $V_n(x)$ into an identity of the above form.

Bailey \cite{Bai49} provided  the following fundamental result for producing an infinite family of identities out of one identity, see also Andrews \cite[pp. 25--27, Theorem 3.3]{And86}.

\begin{lemma}[Bailey's lemma] \label{baileylem} If $\alpha_n(a,q), \beta_n(a,q)$ form a Bailey pair, then
\begin{align}\label{baileylemi}
&\frac{1}{(aq/\rho_1,aq/\rho_2;q)_n}\sum_{j=0}^n \frac{(\rho_1,\rho_2;q)_j(aq/\rho_1\rho_2;q)_{n-j}}{(q;q)_{n-j}} \bigg(\frac{aq}{\rho_1\rho_2}\bigg)^j \beta_j(a,q) \nonumber \\[5pt]
&\qquad =\sum_{j=0}^n \frac{(\rho_1,\rho_2;q)_j}{(q;q)_{n-j}(aq;q)_{n+j}(aq/\rho_1,aq/\rho_2;q)_j} \bigg(\frac{aq}{\rho_1\rho_2}\bigg)^j \alpha_j(a,q).
\end{align}

\end{lemma}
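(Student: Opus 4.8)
The plan is to read \eqref{baileylemi} as the assertion that the iterated pair
\[
\alpha_n'(a,q)=\frac{(\rho_1,\rho_2;q)_n}{(aq/\rho_1,aq/\rho_2;q)_n}\Big(\frac{aq}{\rho_1\rho_2}\Big)^n\alpha_n(a,q),
\]
together with $\beta_n'(a,q)$ defined as the left-hand side of \eqref{baileylemi}, is again a Bailey pair relative to $a$; indeed, writing each summand on the right of \eqref{baileylemi} as $\alpha_j'/\big((q;q)_{n-j}(aq;q)_{n+j}\big)$ shows that \eqref{baileylemi} is precisely $\beta_n'=\sum_{j=0}^n \alpha_j'/\big((q;q)_{n-j}(aq;q)_{n+j}\big)$. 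So I would start from $\beta_n'$, substitute the defining relation \eqref{Baileyp} of the given pair, namely $\beta_j=\sum_{k=0}^j \alpha_k/\big((q;q)_{j-k}(aq;q)_{j+k}\big)$, and interchange the order of the resulting double sum, so that $\beta_n'$ becomes $(aq/\rho_1,aq/\rho_2;q)_n^{-1}\sum_{k=0}^n\alpha_k\,S_{n,k}$, where $S_{n,k}$ is the inner sum over $j$ from $k$ to $n$ of $(\rho_1,\rho_2;q)_j(aq/\rho_1\rho_2;q)_{n-j}(aq/\rho_1\rho_2)^j\big/\big((q;q)_{n-j}(q;q)_{j-k}(aq;q)_{j+k}\big)$.

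Next I would shift $j\mapsto j+k$ so that $S_{n,k}$ runs from $0$ to $n-k$, split each factorial of the form $(\cdot\,;q)_{j+k}$ as $(\cdot\,;q)_k(\cdot\,q^k;q)_j$, and rewrite the two terminating factors of upper index $n-k-j$ by the reversal identity $(A;q)_{N-j}/(q;q)_{N-j}=\big((A;q)_N/(q;q)_N\big)(q^{-N};q)_j\,(q^{1-N}/A;q)_j^{-1}A^{-j}q^{j}$ with $A=aq/\rho_1\rho_2$ and $N=n-k$. The extra factor $q^j$ produced here combines with $(aq/\rho_1\rho_2)^j$ and the reversed power $(\rho_1\rho_2/aq)^j$ to leave precisely $q^j$, so that $S_{n,k}$ becomes a $j$-independent prefactor times the terminating series
\[
\qhyp{3}{2}{q^{-(n-k)},\ \rho_1q^k,\ \rho_2q^k}{aq^{2k+1},\ q^{-(n-k)}\rho_1\rho_2/a}{q,q}.
\]
The decisive point is that this $_3\phi_2$ is balanced: taking $c=aq^{2k+1}$ one checks that $q^{1-(n-k)}(\rho_1q^k)(\rho_2q^k)/c=q^{-(n-k)}\rho_1\rho_2/a$ equals the second lower parameter, so the $q$-Pfaff--Saalsch\"utz summation evaluates it in closed form as $(aq^{k+1}/\rho_1;q)_{n-k}(aq^{k+1}/\rho_2;q)_{n-k}\big/\big((aq^{2k+1};q)_{n-k}(aq/\rho_1\rho_2;q)_{n-k}\big)$.

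It remains to simplify. Feeding this closed form back, the $(aq/\rho_1\rho_2;q)_{n-k}$ from the summation cancels the one coming from the reversal prefactor, and then the relations $(aq;q)_{2k}(aq^{2k+1};q)_{n-k}=(aq;q)_{n+k}$ and $(aq^{k+1}/\rho_i;q)_{n-k}=(aq/\rho_i;q)_n/(aq/\rho_i;q)_k$ for $i=1,2$, together with the prefactor $(aq/\rho_1,aq/\rho_2;q)_n^{-1}$ out front, cause all the $n$-indexed factorials to telescope; one is left with exactly $(\rho_1,\rho_2;q)_k(aq/\rho_1\rho_2)^k\big/\big((q;q)_{n-k}(aq;q)_{n+k}(aq/\rho_1,aq/\rho_2;q)_k\big)$, the coefficient of $\alpha_k$ on the right-hand side of \eqref{baileylemi}. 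The main obstacle is the bookkeeping in the middle step: one must apply the reversal identity correctly, since it is easy to drop the $q^j$ and wrongly land on a $_3\phi_2$ of argument $1$ instead of the balanced series of argument $q$, and one must then verify the balancing condition explicitly and match every $q$-shifted factorial on both sides. None of this is deep, but it must be carried out with care.
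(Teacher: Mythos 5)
Your proof is correct, and it is essentially the canonical argument: the paper itself gives no proof of Lemma \ref{baileylem}, citing Bailey \cite{Bai49} and Andrews \cite[Theorem 3.3]{And86}, and Andrews' proof there is exactly your route --- substitute \eqref{Baileyp}, interchange the double sum, reverse the terminating factorials to produce a balanced $_3\phi_2$ with argument $q$, and evaluate it by the $q$-Pfaff--Saalsch\"utz summation. All the delicate points you flag check out: the reversal identity does yield the net factor $q^j$, the balancing condition $de=abcq$ holds for the parameters $q^{-(n-k)},\rho_1q^k,\rho_2q^k;\,aq^{2k+1},q^{-(n-k)}\rho_1\rho_2/a$, and the telescoping via $(aq;q)_{2k}(aq^{2k+1};q)_{n-k}=(aq;q)_{n+k}$ and $(aq^{k+1}/\rho_i;q)_{n-k}=(aq/\rho_i;q)_n/(aq/\rho_i;q)_k$ gives precisely the coefficient of $\alpha_k$ in \eqref{baileylemi}.
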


There are some special weak forms of Bailey's lemma which attracts more attention since they are more direct to obtain Rogers--Ramanujan type identities from Bailey pairs.  By collecting a list of 96 Bailey pairs, and using some weak forms of Bailey's lemma, Slater compiled her famous list of 130 identities of  Rogers--Ramanujan type \cite{Sla51, Sla52}. We are mainly concerned the following four weak forms of Bailey's lemma.

\begin{lemma}\label{weakbailey} We have
\begin{subequations}
\begin{align}
&\sum_{n\geq 0} q^{n^2} \beta_n(1,q)=\frac{1}{(q;q)_\infty} \sum_{n\geq 0} q^{n^2}\alpha_n(1,q), \label{baileypair1}\\
&\sum_{n\geq 0} q^{n^2} (-q;q^2)_n\beta_n(1,q^2)=\frac{(-q;q^2)_\infty}{(q^2;q^2)_\infty} \sum_{n\geq 0} q^{n^2}\alpha_n(1,q^2), \label{baileypair2}\\
&2\sum_{n\geq 0} (-1)^n (q;q^2)_n\beta_n(1,q)=\frac{(q;q^2)_\infty}{(q^2;q^2)_\infty} \sum_{n\geq 0} (-1)^n \alpha_n(1,q),  \label{baileypair3}\\
&\sum_{n\geq 0} q^{n(n+1)/2}(-1;q)_n \beta_n(1,q)=\frac{(-q;q)_\infty}{(q;q)_\infty} \sum_{n\geq 0} \frac{q^{n(n+1)/2}(-1;q)_n}{(-q;q)_n} \alpha_n(1,q).\label{baileypair4}
\end{align}
\end{subequations}
\end{lemma}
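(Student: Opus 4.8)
The plan is to derive all four identities as limiting specializations of the full Bailey's lemma (Lemma~\ref{baileylem}), so that \eqref{baileylemi} serves as the single master identity from which \eqref{baileypair1}--\eqref{baileypair4} all descend. The first step is to let $n\to\infty$ in \eqref{baileylemi}. Since $\abs{q}<1$, every $q$-shifted factorial carrying a lower index $n\pm j$ converges to its infinite analogue and the finite sums extend to $\infty$; after cancelling the common factor $(q;q)_\infty$ from both sides one is left with the limiting form
\begin{align*}
&\frac{(aq/\rho_1\rho_2;q)_\infty}{(aq/\rho_1,aq/\rho_2;q)_\infty}\sum_{j\geq 0}(\rho_1,\rho_2;q)_j\Big(\frac{aq}{\rho_1\rho_2}\Big)^j\beta_j(a,q)\\
&\qquad =\frac{1}{(aq;q)_\infty}\sum_{j\geq 0}\frac{(\rho_1,\rho_2;q)_j}{(aq/\rho_1,aq/\rho_2;q)_j}\Big(\frac{aq}{\rho_1\rho_2}\Big)^j\alpha_j(a,q).
\end{align*}
Every weak form then follows by setting $a=1$ and choosing the free parameters $\rho_1,\rho_2$ appropriately, in two cases after first replacing $q$ by $q^2$. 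The elementary tool I will use repeatedly is the asymptotic $(\rho;q)_j\sim(-\rho)^jq^{\binom{j}{2}}$ as $\rho\to\infty$, which converts the product $(\rho_1,\rho_2;q)_j(aq/\rho_1\rho_2)^j$ into a clean Gaussian weight in $q$.

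For \eqref{baileypair1} I send both $\rho_1,\rho_2\to\infty$: the asymptotic gives $(\rho_1,\rho_2;q)_j(aq/\rho_1\rho_2)^j\to q^{j^2}$, all factors involving $aq/\rho_1$ or $aq/\rho_2$ tend to $1$, and the identity emerges at once. For \eqref{baileypair2} I work in base $q^2$, set $\rho_1=-q$, and let $\rho_2\to\infty$; then $(\rho_1;q^2)_j=(-q;q^2)_j$ is exactly the summand factor, the limit again produces the weight $q^{j^2}$, and on the right the copy of $(-q;q^2)_j$ cancels between numerator and denominator, leaving the prefactor $(-q;q^2)_\infty/(q^2;q^2)_\infty$. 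For \eqref{baileypair4} I instead take $\rho_1=-1$ and $\rho_2\to\infty$; the asymptotic now yields the weight $q^{j(j+1)/2}$, the numerator retains $(-1;q)_j$, and since $aq/\rho_1=-q$ the denominator $(-q;q)_j$ survives on the right exactly as written.

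The case \eqref{baileypair3} is the one requiring both parameters to stay finite: I take $\rho_1=q^{1/2}$ and $\rho_2=-q^{1/2}$, so that $\rho_1\rho_2=-q$ and hence $aq/\rho_1\rho_2=-1$, which supplies the sign $(-1)^j$. The telescoping identities $(q^{1/2};q)_j(-q^{1/2};q)_j=(q;q^2)_j$ and $(q^{1/2};q)_\infty(-q^{1/2};q)_\infty=(q;q^2)_\infty$ reshape the summands into the target form, and on the right the factor $(q;q^2)_j$ cancels to leave $\sum(-1)^j\alpha_j$. The factor of $2$ and the final prefactor $(q;q^2)_\infty/(q^2;q^2)_\infty$ then drop out by simplifying $(aq/\rho_1\rho_2;q)_\infty=(-1;q)_\infty=2(-q;q)_\infty$ and invoking $(-q;q)_\infty(q;q)_\infty=(q^2;q^2)_\infty$.

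All four computations are routine bookkeeping once the specializations are fixed; the only genuine subtlety is the very first step, namely justifying the passage $n\to\infty$ in \eqref{baileylemi}, where one must legitimately interchange the limit with the summation and extend the range to infinity. For $\abs{q}<1$ this is standard---either by dominated convergence on the convergent $q$-series, or by reading the identities as equalities of formal power series in $q$---so I expect no real obstacle beyond carefully tracking the parameter limits $\rho_i\to\infty$ and the attendant asymptotic $(\rho;q)_j\sim(-\rho)^jq^{\binom{j}{2}}$.
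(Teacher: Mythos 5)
Your proposal is correct and follows exactly the paper's own route: the paper derives these four weak forms from Lemma~\ref{baileylem} by taking $a=1$, letting $n\to\infty$, and specializing $\rho_1,\rho_2$ precisely as you do (with $\rho_1$ and $\rho_2$ interchanged in the case of \eqref{baileypair2}, which is immaterial by symmetry), deferring the routine details to the literature. Your write-up simply fills in those details, including the limiting form of \eqref{baileylemi} and the asymptotic $(\rho;q)_j\sim(-\rho)^jq^{\binom{j}{2}}$, all of which check out.
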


These four weak forms can be obtained from Bailey's Lemma \ref{baileylem} by taking $a=1$, $n\to \infty$, and  $\rho_1, \rho_2$ to be certain special values. More precisely,   \eqref{baileypair1} is obtained  by setting $\rho_1,\rho_2\to \infty$, \eqref{baileypair2} is derived  by taking $q\to q^2, \rho_1\to \infty, \rho_2\to -q$, \eqref{baileypair3} is followed by setting $\rho_1\to \sqrt{q}, \rho_2\to -\sqrt{q}$, and \eqref{baileypair4} is derived by taking  $\rho_1\to \infty, \rho_2\to -1$. For more details, see, for example,  \cite{GL16,LSZ08,Sil18}.

Moreover, by applying Bailey's lemma
iteratively to an appropriate Bailey pair in the simple sum case, one can obtain multi-analog identities of Rogers--Ramanujan type straightforwardly. Let us take the following one \cite{And84} as an illustration.

\begin{lemma} Let $(\alpha_n(a,q), \beta_n(a,q))$ be a Bailey pair, then
\begin{align}
&\sum_{n_k\geq n_{k-1}\geq \cdots \geq n_1\geq 0}  \frac{a^{n_1+\cdots +n_k}q^{n_1^2+\cdots +n_k^2}\beta_{n_1}(a,q)}{(q)_{n_k-n_{k-1}}(q)_{n_{k-1}-n_{k-2}}\cdots (q)_{n_2-n_1}}\nonumber \\[5pt]
&\qquad =\frac{1}{(aq)_\infty} \sum_{n\geq 0} q^{kn^2} a^{kn} \alpha_n(a,q). \label{baileypairm}
\end{align}
\end{lemma}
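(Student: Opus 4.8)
The plan is to prove \eqref{baileypairm} by exhibiting the ``Bailey chain'': the observation, due to Andrews \cite{And84}, that Bailey's Lemma \ref{baileylem} does more than manufacture a single identity --- it transforms one Bailey pair into another. First I would specialize the full form \eqref{baileylemi} by letting $\rho_1,\rho_2\to\infty$. The decisive computation is the limit
\[
\lim_{\rho_1,\rho_2\to\infty}(\rho_1,\rho_2;q)_j\Big(\frac{aq}{\rho_1\rho_2}\Big)^j=a^jq^{j^2},
\]
which follows from the asymptotics $(\rho;q)_j\sim(-\rho)^jq^{\binom{j}{2}}$ as $\rho\to\infty$, while every remaining factor $(aq/\rho_1,aq/\rho_2;q)_n$, $(aq/\rho_1,aq/\rho_2;q)_j$ and $(aq/\rho_1\rho_2;q)_{n-j}$ tends to $1$. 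Passing to the limit in \eqref{baileylemi} then shows that the sequences
\[
\alpha'_n(a,q)=a^nq^{n^2}\alpha_n(a,q),\qquad \beta'_n(a,q)=\sum_{j=0}^n\frac{a^jq^{j^2}}{(q;q)_{n-j}}\beta_j(a,q)
\]
again satisfy the defining relation \eqref{Baileyp}, so $(\alpha'_n,\beta'_n)$ is once more a Bailey pair relative to the same $a$.

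The next step is to iterate this map. Setting $(\alpha^{(0)},\beta^{(0)})=(\alpha,\beta)$ and $(\alpha^{(m)},\beta^{(m)})=\big((\alpha^{(m-1)})',(\beta^{(m-1)})'\big)$, an induction on $m$ gives $\alpha^{(m)}_n(a,q)=a^{mn}q^{mn^2}\alpha_n(a,q)$, and repeated substitution of the recursion for $\beta'$ unfolds $\beta^{(m)}$ into the nested sum
\[
\beta^{(m)}_{N}(a,q)=\sum_{N\ge n_{m}\ge\cdots\ge n_1\ge0}\frac{a^{n_1+\cdots+n_{m}}q^{n_1^2+\cdots+n_{m}^2}}{(q;q)_{N-n_{m}}(q;q)_{n_{m}-n_{m-1}}\cdots(q;q)_{n_2-n_1}}\,\beta_{n_1}(a,q).
\]
Taking $m=k-1$, writing $N=n_k$, and weighting by $a^{n_k}q^{n_k^2}$ before summing over $n_k$, the exponents and denominators match exactly, so the left-hand side of \eqref{baileypairm} is identified as $\sum_{N\ge0}a^Nq^{N^2}\beta^{(k-1)}_N(a,q)$.

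It then remains to evaluate this sum, which I would do by applying the chain one further time and comparing the two descriptions of $\beta^{(k)}$. The chain recursion gives $\beta^{(k)}_N=\sum_{n=0}^N\frac{a^nq^{n^2}}{(q;q)_{N-n}}\beta^{(k-1)}_n$; letting $N\to\infty$, each $(q;q)_{N-n}\to(q;q)_\infty$, whence $(q;q)_\infty\,\beta^{(k)}_\infty$ equals precisely the left-hand side just identified. On the other hand, because $(\alpha^{(k)},\beta^{(k)})$ is a Bailey pair, the relation \eqref{Baileyp} reads $\beta^{(k)}_N=\sum_{j=0}^N\frac{a^{kj}q^{kj^2}\alpha_j}{(q;q)_{N-j}(aq;q)_{N+j}}$, and letting $N\to\infty$ sends $(q;q)_{N-j}\to(q;q)_\infty$ and $(aq;q)_{N+j}\to(aq;q)_\infty$, so that $\beta^{(k)}_\infty=\frac{1}{(q;q)_\infty(aq;q)_\infty}\sum_{j\ge0}a^{kj}q^{kj^2}\alpha_j$. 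Equating the two evaluations of $(q;q)_\infty\,\beta^{(k)}_\infty$ cancels one factor of $(q;q)_\infty$ and yields the right-hand side of \eqref{baileypairm}.

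I expect the main obstacle to be the first step: justifying the double limit $\rho_1,\rho_2\to\infty$ term by term and confirming that all the auxiliary $q$-shifted factorials really do tend to $1$ so that the Bailey-pair relation \eqref{Baileyp} is preserved in the limit. The unfolding of the nested sum under iteration and the interchange of the limit $N\to\infty$ with the summations are routine, the required absolute convergence being guaranteed by $\abs{q}<1$; but they must be tracked carefully to reproduce the exact index ranges and denominators of \eqref{baileypairm}.
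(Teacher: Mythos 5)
Your proof is correct and is precisely the argument the paper has in mind: the lemma is stated there without proof, cited to Andrews \cite{And84} and introduced as the result of ``applying Bailey's lemma iteratively,'' and your $\rho_1,\rho_2\to\infty$ specialization of \eqref{baileylemi} yielding the chain map $(\alpha,\beta)\mapsto(\alpha',\beta')$, its $(k-1)$-fold iteration into the nested sum, and the final $N\to\infty$ passage in \eqref{Baileyp} reconstruct that standard derivation exactly. The limit computations you single out, $(\rho;q)_j\sim(-\rho)^jq^{\binom{j}{2}}$ and the termwise limits of the auxiliary factors, are routine for $\abs{q}<1$, so no gap remains.
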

Obviously, when $a=1$ and $k=1$, the above identity reduces to \eqref{baileypair1}.

\section{The key Bailey pair involving $V_n(x)$}

The object of this section is to construct the Bailey pair which bring Chebyshev polynomials of the third kind $V_n(x)$  into the field of $q$-series. By inserting this Bailey pair into the weak forms of Bailey's lemma, we will obtain many Rogers--Ramanujan type identities.

Based on the three term recurrence relation \eqref{recV} of $V_n(x)$, we obtain the following result.

\begin{theorem}\label{basici} We have
\begin{equation}\label{baileyid}
\prod_{j=1}^n (1+2xq^{2j-1}+q^{4j-2})=\sum_{j=0}^n q^{j^2}{2n \brack n-j}_{q^2}\Big(V_j(x)+V_{j-1}(x)\Big).
\end{equation}
\end{theorem}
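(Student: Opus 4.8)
The plan is to bypass the recurrence and prove \eqref{baileyid} by a direct generating-function computation, viewing both sides as polynomials in $x$ and substituting $2x = a + a^{-1}$ for a formal variable $a$ (equivalently writing $x = \cos\theta$, $a = e^{i\theta}$). Under this substitution each quadratic factor on the left splits,
\[
1 + 2xq^{2j-1} + q^{4j-2} = (1 + aq^{2j-1})(1 + a^{-1}q^{2j-1}),
\]
so that the left-hand side becomes $\prod_{j=1}^n (1+aq^{2j-1})\prod_{j=1}^n(1 + a^{-1}q^{2j-1})$. First I would recombine these two products into a single one: pulling $a^{-1}q^{2j-1}$ out of each factor of the second product gives $\prod_{j=1}^n(1 + a^{-1}q^{2j-1}) = a^{-n}q^{n^2}\prod_{j=1}^n(1 + aq^{-(2j-1)})$, using $\sum_{j=1}^n(2j-1) = n^2$. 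Hence the left-hand side equals $a^{-n}q^{n^2}\prod_{k}(1 + aq^{k})$, where $k$ runs over the $2n$ odd integers $-(2n-1),\dots,-1,1,\dots,2n-1$, an arithmetic progression of length $2n$ with common difference $2$.

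Next I would expand this single symmetric product by the finite $q$-binomial (Gauss) theorem in base $q^2$, namely $\prod_{i=0}^{N-1}(1 + wq^{2i}) = \sum_{r=0}^N q^{r(r-1)}{N\brack r}_{q^2}w^r$, applied with $N = 2n$ and $w = aq^{-(2n-1)}$. A short simplification of the exponent $r(r-1) - (2n-1)r + n^2$ collapses to $(r-n)^2$, and after the shift $m = r-n$ together with the symmetry ${2n\brack r}_{q^2} = {2n\brack 2n-r}_{q^2}$ one obtains the clean Laurent expansion
\[
\prod_{j=1}^n(1 + 2xq^{2j-1} + q^{4j-2}) = \sum_{m=-n}^{n} q^{m^2}{2n\brack n-m}_{q^2}\, a^m .
\]
Because the coefficient $q^{m^2}{2n\brack n-m}_{q^2}$ is invariant under $m \mapsto -m$, this regroups as ${2n\brack n}_{q^2} + \sum_{m=1}^n q^{m^2}{2n\brack n-m}_{q^2}(a^m + a^{-m})$.

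Finally I would translate back to Chebyshev polynomials. Since $a + a^{-1} = 2x = 2\cos\theta$, we have $a^m + a^{-m} = 2\cos m\theta = 2T_m(x)$, and the relation \eqref{CTV} gives $2T_m(x) = V_m(x) + V_{m-1}(x)$ for $m \geq 1$. The remaining constant term ${2n\brack n}_{q^2}$ is precisely the $m=0$ contribution $q^0{2n\brack n}_{q^2}(V_0(x) + V_{-1}(x))$, since $V_0(x) + V_{-1}(x) = 1$. Collecting the $m = 0$ term with the pairs $m \geq 1$ yields exactly the right-hand side of \eqref{baileyid}.

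The only delicate point is the boundary index $m = 0$: one must not apply $a^m + a^{-m} = 2T_m$ there, because $2T_0(x) = 2$ whereas the correct coefficient structure forces the value $V_0(x) + V_{-1}(x) = 1$; keeping track of the single unpaired constant term is what makes the identification with $V_m + V_{m-1}$ work uniformly. I expect the recurrence-based route hinted at by \eqref{recV} --- induction on $n$, multiplying the left side by the factor $1 + 2xq^{2n+1} + q^{4n+2}$ and reorganizing the right side through the $q$-Pascal relations for ${2n\brack n-j}_{q^2}$ and the substitution $2xV_j = V_{j+1} + V_{j-1}$ --- to be feasible as well, but there the main obstacle is exactly the same boundary phenomenon: the three-term recurrence for $V_j$ (and for $V_j + V_{j-1}$) fails at the lowest indices, so the induction requires separate bookkeeping for the $j = 0, 1$ terms, which the generating-function argument handles automatically.
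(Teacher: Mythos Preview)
Your argument is correct and takes a genuinely different route from the paper's. The paper proceeds by induction on $n$: setting $v_j(x)=V_j(x)+V_{j-1}(x)$, it checks that the right-hand side $R_n(x)$ satisfies the same multiplicative recurrence $R_n(x)=(1+2xq^{2n-1}+q^{4n-2})R_{n-1}(x)$ as the left, by expanding $2xv_j$ via the three-term recurrence and then matching coefficients of each $v_j$ on both sides, which reduces to verifying the $q$-binomial relation
\[
q^{2n-1+(j-1)^2}{2n-2\brack n-j}_{q^2}+q^{2n-1+(j+1)^2}{2n-2\brack n-2-j}_{q^2}
= q^{j^2}{2n\brack n-j}_{q^2}-q^{j^2}(1+q^{4n-2}){2n-2\brack n-1-j}_{q^2}.
\]
Your approach instead linearises the problem by the substitution $2x=a+a^{-1}$, splits the product into $(-aq;q^2)_n(-a^{-1}q;q^2)_n$, and reads off the Laurent expansion in $a$ from the finite $q$-binomial theorem; the identification $a^m+a^{-m}=2T_m(x)=V_m(x)+V_{m-1}(x)$ then recovers the right-hand side in one stroke. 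What you gain is a closed-form derivation that bypasses the coefficient-matching and the auxiliary $q$-binomial identity entirely, and that makes transparent \emph{why} the combination $V_j+V_{j-1}=2T_j$ is the natural object here. What the paper's inductive proof buys is self-containment: it never leaves the real variable $x$ or invokes the trigonometric parametrisation, and it exhibits explicitly the $q$-Pascal-type identity underlying the Bailey-pair structure. Your closing remark about the boundary indices $j=0,1$ is exactly on point---the paper does handle those terms by hand before comparing coefficients of $v_j$.
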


\proof For brevity, we write $v_n(x)=V_{n}(x)+V_{n-1}(x)$. Obviously, $v_0(x)=1$, $v_1(x)=V_1(x)+V_0(x)=2x$, and $v_2(x)=V_2(x)+V_1(x)=4x^2-2$. It is easy to see that  $\{v_n(x)\}_{n\geq 0}$ form a basis for the polynomials in $x$ over $\mathbb{C}$.

Since for $n>1$, $V_n(x)$ satisfies the three-term recurrence relation
\[
V_n(x)=2xV_{n-1}(x)-V_{n-2}(x),
\]
It is easy to show that, for $n>2$,
\begin{equation}\label{Arec}
v_n(x)=2xv_{n-1}(x)-v_{n-2}(x).
\end{equation}

Denote the left and the right hand sides of \eqref{baileyid}  by $L_n(x)$ and $R_n(x)$, respectively. Notice that $L_n(x)$ is uniquely determined by the recurrence relation
\begin{equation}\label{recL}
L_{n}(x)=(1+2xq^{2n-1}+q^{4n-2})L_{n-1}(x)
\end{equation}
for $n\geq 1$ combined with $L_0(x)=1$.
Clearly, $R_0(x)=1$. Therefore, to show that $L_n(x)=R_n(x)$, it is sufficient to prove that  for $n>0$, $R_n(x)$ satisfies the same recurrence relation \eqref{recL}, which can be rewritten as follows
\begin{equation}\label{recR}
2xq^{2n-1}R_{n-1}(x)=R_{n}(x)-(1+q^{4n-2})R_{n-1}(x).
\end{equation}

By \eqref{Arec}, it directly leads to that for $j>1$
\[
2xv_j(x)=v_{j+1}(x)+v_{j-1}(x).
\]
Substituting the above relation into \eqref{recR}, it becomes the following form
\begin{align*}
&q^{2n-1}\left({2n-2 \brack n-1}_{q^2} 2x+q {2n-2 \brack n-2}_{q^2} 2x v_1(x)+ \sum_{j=2}^{n-1} q^{j^2}{2n-2 \brack n-1-j}_{q^2}(v_{j+1}(x)+v_{j-1}(x))\right)\\
&\qquad =\sum_{j\geq 0} q^{j^2} \left({2n \brack n-j}_{q^2}-(1+q^{4n-2}){2n-2 \brack n-1-j}_{q^2}\right)v_j(x).
\end{align*}
Then in the first two terms on the left hand side of the above identity, we can replace $2x$ and $2xv_1(x)$ with $v_1(x)$   and  $v_2(x)+2v_0(x)$, respectively. Since
$\{v_n(x)\}_{n\geq 0}$ form a basis for the polynomials in $x$, to verify $L_n(x)=R_n(x)$, it is sufficient to prove the coefficients of $v_j(x)$ on both sides of the above identity coincide with each other, that is, for $j\geq 0$,
\begin{align*}
&q^{2n-1+(j-1)^2}{2n-2 \brack n-j}_{q^2}+q^{2n-1+(j+1)^2}{2n-2 \brack n-2-j}_{q^2}\\
&\qquad =q^{j^2} {2n \brack n-j}_{q^2}-q^{j^2}(1+q^{4n-2}){2n-2 \brack n-1-j}_{q^2},
\end{align*}
which can be confirmed by direct simplification, and thereby \eqref{baileyid} is valid. \qed

Now, if we rewrite  \eqref{baileyid} as follows
\[
\frac{\prod_{j=1}^n (1+2xq^{2j-1}+q^{4j-2})}{(q^2;q^2)_{2n}}=\sum_{j=0}^n \frac{q^{j^2}(V_j(x)+V_{j-1}(x))}{(q^2;q^2)_{n+j}(q^2;q^2)_{n-j}},
\]
then by \eqref{Baileyp}, it immediately implies our key Bailey pair
\begin{equation}\label{baileypairV}
\left(q^{n^2}\Big(V_n(x)+V_{n-1}(x)\Big),\frac{\prod_{j=1}^n (1+2xq^{2j-1}+q^{4j-2})}{(q^2;q^2)_{2n}}  \right)
\end{equation}
relative to $a=1$ and $q\to q^2$.
By fitting this Bailey pair \eqref{baileypairV} into the weak forms of Bailey's Lemma \ref{weakbailey}, we will obtain a family of Rogers--Ramanujan type identities and also identities related to Apell--Lerch series and Hecke--type series.

\section{The weak form \eqref{baileypair1} of Bailey's Lemma}

In this section, we will show that how to derive the companion identity \eqref{Entry534} of Dyson's favourite one \eqref{Dysonf} by using the key Bailey pair \eqref{baileypairV} associated with $V_n(x)$. Meanwhile, by taking $x$ to be some other special values, we will  obtain more Rogers--Ramanujan type identities. We also consider the multisum generalization of these identities.

First, by setting $q\to q^2$ in the weak form  \eqref{baileypair1} of Bailey's Lemma and with aid of  the Bailey pair \eqref{baileypairV}, we directly obtain  Theorem \ref{mainthm}. Now, we will show  how to derive Ramanujan's identity \eqref{Entry534} from Theorem \ref{mainthm}.

\begin{theorem}
Ramanujan's identity \eqref{Entry534} is valid.
\end{theorem}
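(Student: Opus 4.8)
The plan is to specialize Theorem \ref{mainthm} at $x=\tfrac{1}{2}$ and then put the resulting theta-type sum on the right-hand side into closed product form. Setting $x=\tfrac{1}{2}$ turns each factor $1+2xq^{2j-1}+q^{4j-2}$ into $1+q^{2j-1}+q^{4j-2}$, so the left-hand side of \eqref{maini} becomes verbatim the left-hand side of \eqref{Entry534}. It therefore remains to prove
\[
\frac{1}{(q^2;q^2)_\infty}\sum_{n\geq 0} q^{3n^2}\Big(V_n(\tfrac{1}{2})+V_{n-1}(\tfrac{1}{2})\Big)=\frac{(q,q^5,q^6;q^6)_\infty (q^9;q^{18})_\infty}{(q;q)_\infty}.
\]

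First I would evaluate the sum $\sum_{n\geq 0} q^{3n^2}\big(V_n(\tfrac{1}{2})+V_{n-1}(\tfrac{1}{2})\big)$. By \eqref{CTV} one has $V_n(\tfrac{1}{2})+V_{n-1}(\tfrac{1}{2})=2T_n(\tfrac{1}{2})=2\cos(n\pi/3)$ for $n\geq 1$, while the $n=0$ term equals $V_0(\tfrac{1}{2})+V_{-1}(\tfrac{1}{2})=1$ by the convention $V_{-1}=0$. Since the summand $\cos(n\pi/3)\,q^{3n^2}$ is even in $n$, these two facts combine into the bilateral identity
\[
\sum_{n\geq 0} q^{3n^2}\Big(V_n(\tfrac{1}{2})+V_{n-1}(\tfrac{1}{2})\Big)=\sum_{n=-\infty}^\infty \cos(n\pi/3)\,q^{3n^2}=\sum_{n=-\infty}^\infty \zeta^{n} q^{3n^2},
\]
where $\zeta=e^{i\pi/3}$ and the last step uses the symmetry $n\mapsto -n$. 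The periodicity furnished by \eqref{V12} is exactly what makes this collapse work, with the $n=0$ boundary term (where $V_{-1}=0$ breaks the pattern $2T_n$) accounting for the difference between the one-sided and bilateral sums.

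Next I would apply the Jacobi triple product $\sum_{n} z^n Q^{n^2}=(Q^2;Q^2)_\infty(-zQ;Q^2)_\infty(-z^{-1}Q;Q^2)_\infty$ with $Q=q^3$ and $z=\zeta$. The two conjugate products merge via $(1+\zeta t)(1+\zeta^{-1}t)=1+2\cos(\pi/3)\,t+t^2=1+t+t^2$ with $t=q^{3+6k}$, which gives
\[
\sum_{n=-\infty}^\infty \zeta^{n} q^{3n^2}=(q^6;q^6)_\infty \prod_{k\geq 0}\big(1+q^{3+6k}+q^{6+12k}\big)=(q^6;q^6)_\infty\,\frac{(q^9;q^{18})_\infty}{(q^3;q^6)_\infty},
\]
where the final equality uses $1+x+x^2=(1-x^3)/(1-x)$ with $x=q^{3+6k}$.

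Finally I would reinstate the prefactor $1/(q^2;q^2)_\infty$ and check the purely product identity
\[
\frac{(q^6;q^6)_\infty}{(q^2;q^2)_\infty\,(q^3;q^6)_\infty}=\frac{(q,q^5,q^6;q^6)_\infty}{(q;q)_\infty},
\]
which is routine: grouping both $(q;q)_\infty$ and $(q^2;q^2)_\infty$ according to residues modulo $6$ reduces each side to $1/\big((q^2;q^6)_\infty(q^3;q^6)_\infty(q^4;q^6)_\infty\big)$. Carrying along the surviving factor $(q^9;q^{18})_\infty$ then reproduces the right-hand side of \eqref{Entry534}. The main obstacle is the middle step, namely recognizing the period-six coefficient sequence coming from \eqref{V12} as a single theta function and correctly combining the conjugate Jacobi-triple-product factors into the cyclotomic product $1+q^{3+6k}+q^{6+12k}$; the concluding Pochhammer bookkeeping, though elementary, is where residue and sign slips are easiest to make.
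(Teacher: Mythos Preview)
Your proof is correct and follows the same overall route as the paper: specialize \eqref{maini} at $x=\tfrac{1}{2}$, rewrite the right-hand side as a bilateral theta sum, apply Jacobi's triple product, and simplify the resulting products via $1+t+t^2=(1-t^3)/(1-t)$. The one notable difference is that where the paper breaks $\sum_{n\ge 0} q^{3n^2}\big(V_n(\tfrac12)+V_{n-1}(\tfrac12)\big)$ into residue classes mod~$6$ using \eqref{V12} and then reassembles them into $\sum_{n\in\mathbb{Z}}(-1)^n e^{2n\pi i/3}q^{3n^2}$, you bypass this case analysis by invoking \eqref{CTV} to get $V_n(\tfrac12)+V_{n-1}(\tfrac12)=2T_n(\tfrac12)=2\cos(n\pi/3)$ directly, landing on the equivalent form $\sum_{n\in\mathbb{Z}} e^{in\pi/3}q^{3n^2}$; the two theta representations differ only by the substitution $\zeta\mapsto -\zeta^{2}=\overline{\zeta}$ and yield the same Jacobi product.
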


\proof Denote the left hand side of identity \eqref{Entry534} by $L$. In Theorem \ref{mainthm}, by taking $x=\frac{1}{2}$ and using the special value of $V_n(x)$   \eqref{V12}, we have
\begin{align*}
L&=\sum_{n\geq 0}  \frac{q^{2n^2}\prod_{j=1}^n(1+q^{2j-1}+q^{4j-2})}{(q^2;q^2)_{2n}}\\[5pt]
&=\frac{1}{(q^2;q^2)_\infty}\left(1+ \sum_{n\geq 1} q^{3n^2}\bigg(V_n\Big(\frac{1}{2}\Big)+V_{n-1}\Big(\frac{1}{2}\Big)\bigg)\right)\\[5pt]
&=\frac{1}{(q^2;q^2)_\infty} \Big(1+2\sum_{n\geq 1} q^{3(6n)^2}+\sum_{n\geq 0} q^{3(6n+1)^2}-\sum_{n\geq 0} q^{3(6n+2)^2}\\[5pt]
&\qquad\qquad\qquad -2\sum_{n\geq 0} q^{3(6n+3)^2}-\sum_{n\geq 0} q^{3(6n+4)^2}+\sum_{n\geq 0} q^{3(6n+5)^2}\Big).
\end{align*}
Taking the parity of $n$ into consideration, we see that \begin{align*}
&\sum_{n\geq 1} q^{3(6n)^2}-\sum_{n\geq 0} q^{3(6n+3)^2}=\sum_{n\geq 1} (-1)^n q^{3(3n)^2},\\[5pt]
&\sum_{n\geq 0} q^{3(6n+1)^2}-\sum_{n\geq 0} q^{3(6n+4)^2}=\sum_{n\geq 0} (-1)^n q^{3(3n+1)^2},\\[5pt]
&\sum_{n\geq 0} q^{3(6n+2)^2}-\sum_{n\geq 0} q^{3(6n+5)^2}=\sum_{n\geq 0} (-1)^n q^{3(3n+2)^2}.
\end{align*}
Therefore, it implies that
\begin{align}
L&=\frac{1}{(q^2;q^2)_\infty}\Big(1+2\sum_{n\geq 1} (-1)^nq^{3(3n)^2}+\sum_{n\geq 0} (-1)^n  q^{3(3n+1)^2}-\sum_{n\geq 0} (-1)^n  q^{3(3n+2)^2}\Big)\nonumber\\[5pt]
&  = \frac{1}{(q^2;q^2)_\infty}\Big(\sum_{n=-\infty}^\infty (-1)^nq^{3(3n)^2}+\sum_{n=-\infty}^\infty (-1)^n  q^{3(3n+1)^2}\Big)\nonumber\\[5pt]
&=\frac{1}{(q^2;q^2)_\infty}\sum_{n=-\infty}^\infty (-1)^n q^{3n^2}e^{\frac{2n\pi i}{3}}, \label{p534-1}
\end{align}
in which the last step can be affirmed  by considering the summation by  congruences of $n$ module 3. Then by applying the famous Jacobi's triple product identity which is  for $z, q \in \mathbf{C}$, $z\neq 0$ and $|q|<1$,
\begin{equation}\label{jacobi}
\sum_{n=-\infty}^\infty (-1)^n q^{n\choose 2} z^n=(q,z,q/z;q)_\infty,
\end{equation}
we further obtain that
\begin{align*}
L&=\frac{1}{(q^2;q^2)_\infty} (q^3e^{\frac{2\pi i}{3}},q^3e^{-\frac{2\pi i}{3}},q^6;q^6)_\infty \\[5pt]
&=\frac{(q^6;q^6)_\infty}{(q^2;q^2)_\infty} \prod_{n\geq 0} (1-q^{3+6n}e^{\frac{2\pi i}{3}})(1-q^{3+6n}e^{-\frac{2\pi i}{3}})\\[5pt]
&=\frac{(q^6;q^6)_\infty}{(q^2;q^2)_\infty} \prod_{n\geq 0} \big(1-q^{3+6n}(e^{\frac{2\pi i}{3}}+e^{-\frac{2\pi i}{3}})+q^{6+12n}\big)\\[5pt]
&=\frac{(q^6;q^6)_\infty}{(q^2;q^2)_\infty} \prod_{n\geq 0} (1+q^{3+6n}+q^{6+12n})\\[5pt]
&=\frac{(q^6;q^6)_\infty}{(q^2;q^2)_\infty} \prod_{n\geq 0}\frac{(1-q^{9+18n})}{(1-q^{3+6n})} \\[5pt]
&=\frac{(q^6;q^6)_\infty(q^9;q^{18})_\infty}{(q^2;q^2)_\infty(q^3;q^6)_\infty},
\end{align*}
which completes the proof by multiplying  both of the numerator and the denominator by $(q,q^5;q^6)_\infty$ and then simplifying. \qed

Noting that by setting $x=-\frac{1}{2}$ in \eqref{maini} and following the similar procedures as above, we  can also  get \eqref{Entry534} after  substituting $q\rightarrow -q$.

As more consequences of Theorem \ref{mainthm}, let us consider cases corresponding to other special values of $x$ as given in  Lemma \ref{specialv}.
The special value of
$V_n(x)$ at $x=-1$ (or equivalently, $x=1$)  yields the following Ramanujan's identity.

\begin{theorem}[Entry 5.3.3, {\cite[P. 102]{AB09}}] We have
\begin{align}
&\sum_{n\geq 0} \frac{q^{2n^2}(q;q^2)_n^2 }{(q^2;q^2)_{2n}}=
\frac{(q^3;q^3)_\infty (q^3;q^6)_\infty}{(q^2;q^2)_\infty}. \label{x-1}
\end{align}
\end{theorem}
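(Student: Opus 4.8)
The plan is to specialize Theorem~\ref{mainthm} at $x=-1$ and then identify both sides explicitly. First I would simplify the left-hand product: at $x=-1$ each factor becomes $1-2q^{2j-1}+q^{4j-2}=(1-q^{2j-1})^2$, so that $\prod_{j=1}^n(1+2xq^{2j-1}+q^{4j-2})=(q;q^2)_n^2$, which is exactly the summand appearing in \eqref{x-1}. Thus the left-hand side of \eqref{x-1} coincides with the left-hand side of \eqref{maini} evaluated at $x=-1$, and it remains only to transform the right-hand side of \eqref{maini} into the desired product.

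Next I would evaluate that right-hand side. Using the special value \eqref{V-1}, namely $V_n(-1)=(-1)^n(2n+1)$, together with the convention $V_{-1}=0$, a short computation gives $V_0(-1)+V_{-1}(-1)=1$ and, for $n\geq 1$, $V_n(-1)+V_{n-1}(-1)=(-1)^n(2n+1)-(-1)^n(2n-1)=2(-1)^n$. Hence $\sum_{n\geq 0}q^{3n^2}\big(V_n(-1)+V_{n-1}(-1)\big)=1+2\sum_{n\geq 1}(-1)^nq^{3n^2}=\sum_{n=-\infty}^\infty(-1)^nq^{3n^2}$, where the last equality uses that $(-1)^nq^{3n^2}$ is even in $n$.

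It then remains to evaluate this bilateral theta sum in product form. I would apply Jacobi's triple product \eqref{jacobi} with $q$ replaced by $q^6$ and $z=q^3$: since $6\binom{n}{2}+3n=3n^2$, this yields $\sum_{n=-\infty}^\infty(-1)^nq^{3n^2}=(q^3,q^3,q^6;q^6)_\infty$. Dividing by $(q^2;q^2)_\infty$ presents the right-hand side of \eqref{maini} at $x=-1$ as $(q^3;q^6)_\infty^2(q^6;q^6)_\infty/(q^2;q^2)_\infty$. To match \eqref{x-1} I would invoke the elementary product identity $(q^3;q^6)_\infty(q^6;q^6)_\infty=(q^3;q^3)_\infty$, obtained by splitting the factors of $(q^3;q^3)_\infty$ according to the parity of the exponent; applying it once converts the numerator into $(q^3;q^3)_\infty(q^3;q^6)_\infty$, which is exactly the product in \eqref{x-1}.

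There is no serious obstacle here; the only points requiring care are the separate treatment of the $n=0$ term when telescoping $V_n(-1)+V_{n-1}(-1)$, and choosing the substitution in \eqref{jacobi} so that the quadratic exponent comes out as $3n^2$ with the correct sign weight $(-1)^n$. One could equally start from $x=1$, where $V_n(1)=1$ by \eqref{V1} and the product becomes $(-q;q^2)_n^2$; the replacement $q\mapsto -q$ then fixes the exponents and signs so as to recover precisely \eqref{x-1}, which explains the parenthetical remark that $x=-1$ and $x=1$ are equivalent here.
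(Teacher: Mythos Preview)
Your proposal is correct and follows essentially the same route as the paper's proof: specialize \eqref{maini} at $x=-1$, use \eqref{V-1} to reduce the Chebyshev sum to $\sum_{n=-\infty}^\infty(-1)^nq^{3n^2}$, and finish with Jacobi's triple product. The paper simply writes ``by using Jacobi's triple product identity \eqref{jacobi} and simplifying'' for the final step, whereas you spell out the substitution $q\mapsto q^6$, $z=q^3$ and the factorization $(q^3;q^6)_\infty(q^6;q^6)_\infty=(q^3;q^3)_\infty$ explicitly; your remark on the equivalent $x=1$ route also matches the paper's parenthetical comment.
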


\proof  Taking $x=-1$ in \eqref{maini} and using the special value of $V_n(x)$  at $x=-1$ \eqref{V-1}, we have
\begin{align*}
\sum_{n\geq 0} \frac{q^{2n^2}\prod_{j=1}^n (1-2q^{2j-1}+q^{4j-2})}{(q^2;q^2)_{2n}}&=\frac{1}{(q^2;q^2)_\infty} \sum_{n\geq 0} q^{3n^2}\big(V_n(-1)+V_{n-1}(-1)\big)\\[5pt]
&=\frac{1}{(q^2;q^2)_\infty}\big(1+2\sum_{n\geq 1} (-1)^nq^{3n^2}\big)\\[5pt]
&=\frac{1}{(q^2;q^2)_\infty}\sum_{n=-\infty}^\infty (-1)^n q^{3n^2}.
\end{align*}
Then the proof is complete by using Jacobi's triple product identity \eqref{jacobi} and simplifying. \qed

When $x$ is taken to be zero, we obtain Entry 5.3.2 in Ramanujan's lost notebook.

\begin{theorem}[Entry 5.3.2, {\cite[P. 101]{AB09}}] We have
\begin{align}\label{Entry532}
\sum_{n\geq 0} \frac{q^{n^2}(-q;q^2)_n }{(q;q)_{2n}}=\frac{(q^6;q^{12})_\infty(q^6;q^6)_\infty}{(q;q)_\infty}.
\end{align}
\end{theorem}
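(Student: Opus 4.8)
The plan is to specialize Theorem \ref{mainthm} at $x=0$, exactly as the proofs of \eqref{Entry534} (at $x=\frac{1}{2}$) and \eqref{x-1} (at $x=-1$) specialize it at particular values, and then to perform a base change $q^2\mapsto q$ at the very end, since the target \eqref{Entry532} is written in the base $q$ with $q^{n^2}$ whereas Theorem \ref{mainthm} produces $q^{2n^2}$. Concretely, I would first put $x=0$ in \eqref{maini}. On the left the product collapses to $\prod_{j=1}^n(1+q^{4j-2})=(-q^2;q^4)_n$, so the left side becomes $\sum_{n\geq 0}q^{2n^2}(-q^2;q^4)_n/(q^2;q^2)_{2n}$.

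For the right side the task is to evaluate $\sum_{n\geq 0}q^{3n^2}\bigl(V_n(0)+V_{n-1}(0)\bigr)$ using the special value \eqref{V-0}. Writing $v_n(0)=V_n(0)+V_{n-1}(0)$ and running through the four residues of $n$ modulo $4$, one finds $v_0(0)=1$, $v_n(0)=0$ for odd $n$, and $v_{2m}(0)=2(-1)^m$ for $m\geq 1$. Hence only the even-indexed terms survive, and $\sum_{n\geq 0}q^{3n^2}v_n(0)=1+2\sum_{m\geq 1}(-1)^m q^{12m^2}=\sum_{m=-\infty}^{\infty}(-1)^m q^{12m^2}$, which is precisely a bilateral theta series. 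Applying Jacobi's triple product \eqref{jacobi} (with $q\to q^{24}$ and $z\to q^{12}$) then gives $\sum_{m=-\infty}^{\infty}(-1)^m q^{12m^2}=(q^{12},q^{12},q^{24};q^{24})_\infty$, so the right side of the $x=0$ specialization equals $(q^{12},q^{12},q^{24};q^{24})_\infty/(q^2;q^2)_\infty$.

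At this point both sides of the $x=0$ identity are power series in $q^2$, so the substitution $q^2\mapsto q$ is legitimate and turns the left side into $\sum_{n\geq 0}q^{n^2}(-q;q^2)_n/(q;q)_{2n}$, the left side of \eqref{Entry532}. The same substitution sends the right side to $(q^6;q^{12})_\infty^2(q^{12};q^{12})_\infty/(q;q)_\infty$, and I would finish by using the even/odd splitting $(q^6;q^6)_\infty=(q^6;q^{12})_\infty(q^{12};q^{12})_\infty$ to rewrite this as $(q^6;q^{12})_\infty(q^6;q^6)_\infty/(q;q)_\infty$, which matches \eqref{Entry532}. The one genuinely new ingredient compared with the proofs of \eqref{Entry534} and \eqref{x-1} is this final base change $q^2\mapsto q$; the only real bookkeeping obstacle is the careful tabulation of $V_n(0)+V_{n-1}(0)$ across residues modulo $4$ and the recognition that the surviving terms reassemble into the single bilateral series $\sum_{m}(-1)^m q^{12m^2}$.
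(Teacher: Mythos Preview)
Your proposal is correct and follows essentially the same route as the paper. Both specialize Theorem \ref{mainthm} at $x=0$, use \eqref{V-0} to see that $V_n(0)+V_{n-1}(0)$ vanishes for odd $n$ and equals $2(-1)^m$ at $n=2m\ (m\geq 1)$, recognize the resulting theta series $\sum_{m}(-1)^m q^{12m^2}$, apply Jacobi's triple product to obtain $(q^{12},q^{12},q^{24};q^{24})_\infty$, and finish with the base change $q\mapsto q^{1/2}$ (your $q^2\mapsto q$) together with the splitting $(q^6;q^6)_\infty=(q^6;q^{12})_\infty(q^{12};q^{12})_\infty$.
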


\proof By setting $x=0$ in \eqref{maini} and using the special value of $V_n(x)$  at $x=0$ \eqref{V-0},  we obtain
\begin{align*}
\sum_{n\geq 0} \frac{q^{2n^2}\prod_{j=1}^n (1+q^{4j-2})}{(q^2;q^2)_{2n}}&=\frac{1}{(q^2;q^2)_\infty} \sum_{n\geq 0} q^{3n^2}\big(V_n(0)+V_{n-1}(0)\big)\\[5pt]
&=\frac{1}{(q^2;q^2)_\infty}\Big(1+2\sum_{n\geq 1} q^{3(4n)^2}-2\sum_{n\geq 0} q^{3(4n+2)^2}\Big)\\[5pt]
&=\frac{1}{(q^2;q^2)_\infty}\Big(1+2\sum_{n\geq 1} (-1)^nq^{3(2n)^2}\Big)\\[5pt]
&=\frac{1}{(q^2;q^2)_\infty}\sum_{n=-\infty}^\infty (-1)^n q^{12n^2}.
\end{align*}
By  Jacobi's triple product identity \eqref{jacobi}, it leads to that
\begin{align*}
\sum_{n\geq 0} \frac{q^{2n^2}(-q^2;q^4)_n}{(q^2;q^2)_{2n}}&=\frac{(q^{12},q^{12},q^{24};q^{24})_\infty}{(q^2;q^2)_\infty}\\
&=\frac{(q^{12};q^{24})_\infty(q^{12};q^{12})_\infty}{(q^2;q^2)_\infty},
\end{align*}
which completes the proof by replacing  $q$ with $q^{\frac{1}{2}}$ in the above identity. \qed

For identity \eqref{Entry532}, it is also contained in Slater's list \cite[p.155, (29)]{Sla52}, and one can see also Andrews and Berndt \cite[p. 254, Entry 11.3.1]{AB05}.
For the above two identities \eqref{x-1} and  \eqref{Entry532}, Andrews  also considered the theta expansions of their left hand sides, which were given by equations $(3.1)_R$ and $(3.2)_R$ in \cite{And81}, respectively.

Taking $x=\frac{3}{2}$ and $-\frac{3}{2}$ in \eqref{maini}, respectively, we obtain the following two identities immediately.

\begin{theorem} We have
\begin{align}
& \sum_{n\geq 0} \frac{\prod_{j=1}^n (1+3q^{2j-1}+q^{4j-2})q^{2n^2}}{(q^2;q^2)_{2n}}=
\frac{1}{(q^2;q^2)_\infty}\sum_{n\geq 0} q^{3n^2}(F_{2n+1}+F_{2n-1}),\label{x32}\\[5pt]
& \sum_{n\geq 0} \frac{\prod_{j=1}^n (1-3q^{2j-1}+q^{4j-2})q^{2n^2}}{(q^2;q^2)_{2n}}=
\frac{1}{(q^2;q^2)_\infty}\Big(1+\sum_{n\geq 1} (-1)^n q^{3n^2}L_{2n}\Big). \label{x-32}
\end{align}
\end{theorem}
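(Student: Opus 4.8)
The plan is to derive both \eqref{x32} and \eqref{x-32} as immediate specializations of Theorem~\ref{mainthm}, substituting the two values $x=\pm\frac{3}{2}$ into \eqref{maini} and reading off the corresponding evaluations of $V_n(x)$ from Lemma~\ref{specialv}. For \eqref{x32} I would put $x=\frac{3}{2}$ in \eqref{maini}. Since $2x=3$, the product on the left-hand side becomes $\prod_{j=1}^n(1+3q^{2j-1}+q^{4j-2})$, exactly the shape required. On the right-hand side the evaluation \eqref{V23}, namely $V_n(\frac{3}{2})=F_{2n+1}$, turns the summand $V_n(\frac{3}{2})+V_{n-1}(\frac{3}{2})$ into $F_{2n+1}+F_{2n-1}$ for $n\geq 1$, while the convention $V_{-1}(x)=0$ forces the $n=0$ coefficient to be $V_0(\frac{3}{2})=F_1=1$; this yields \eqref{x32}.

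For \eqref{x-32} I would put $x=-\frac{3}{2}$, so that $2x=-3$ produces the product $\prod_{j=1}^n(1-3q^{2j-1}+q^{4j-2})$ on the left. Using \eqref{V-23}, that $V_n(-\frac{3}{2})=(-1)^nL_{2n+1}$, the summand becomes, for $n\geq 1$,
\[
V_n\Big(-\frac{3}{2}\Big)+V_{n-1}\Big(-\frac{3}{2}\Big)=(-1)^nL_{2n+1}+(-1)^{n-1}L_{2n-1}=(-1)^n\big(L_{2n+1}-L_{2n-1}\big).
\]
The one genuinely computational step is to collapse the bracket: the Lucas recurrence $L_{2n+1}=L_{2n}+L_{2n-1}$ gives $L_{2n+1}-L_{2n-1}=L_{2n}$, so the summand reduces to $(-1)^nL_{2n}$, matching the general term in \eqref{x-32}.

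The only point demanding care is the boundary term $n=0$, and this is precisely why \eqref{x-32} is written with the constant $1$ split off from a sum beginning at $n=1$: here $V_{-1}(-\frac{3}{2})=0$, so the $n=0$ coefficient equals $V_0(-\frac{3}{2})=L_1=1$, rather than the value $(-1)^0L_0=2$ that the general formula $(-1)^nL_{2n}$ would predict. I therefore expect no substantive obstacle in either case; once Theorem~\ref{mainthm} and the special values of Lemma~\ref{specialv} are in hand, the two identities follow by direct substitution, the Lucas recurrence and the correct treatment of the $n=0$ boundary being the sole nontrivial manipulations.
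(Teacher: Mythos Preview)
Your proposal is correct and follows exactly the paper's approach: the paper simply states that taking $x=\frac{3}{2}$ and $x=-\frac{3}{2}$ in \eqref{maini} yields the two identities ``immediately,'' without spelling out any details. Your write-up actually supplies the missing details (the Lucas recurrence $L_{2n+1}-L_{2n-1}=L_{2n}$ and the careful handling of the $n=0$ boundary term), so it is a faithful and slightly more explicit rendering of the paper's intended argument.
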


Remark that from \cite[A000032]{Slo64}, we see that for $n\geq 1$,  $L_n=2F_{n+1}-F_{n}$. It clearly implies that for $n\geq 1$
\begin{equation}\label{eqLF}
L_{2n}=F_{2n+1}+F_{2n-1}.
\end{equation}
Consequently, the identities \eqref{x32} and \eqref{x-32} are equivalent by substituting $q\to -q$.

Now, let us consider the multi-analog of Rogers--Ramanujan type identities. By inserting the key Bailey pair \eqref{baileypairV} into  \eqref{baileypairm} with $a=1$ and $q\to q^2$, we obtain the following generalization of Theorem \ref{mainthm}.

\begin{theorem} We have
\begin{align}
&\frac{1}{(q^2;q^2)_\infty} \sum_{n\geq 0} q^{(2k+1)n^2} (V_n(x)+V_{n-1}(x)) \nonumber\\
&\quad =\sum_{n_k\geq n_{k-1}\geq \cdots \geq n_1\geq 0} \frac{q^{2(n_1^2+\cdots +n_k^2)}\prod_{j=1}^{n_1} (1+2xq^{2j-1}+q^{4j-2})}{(q^2;q^2)_{n_k-n_{k-1}}(q^2;q^2)_{n_{k-1}-n_{k-2}}\cdots (q^2;q^2)_{n_2-n_1}(q^2;q^2)_{2n_1}}.  \label{mainthm3}
\end{align}
\end{theorem}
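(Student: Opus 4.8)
The plan is to obtain \eqref{mainthm3} as a direct specialization of the iterated Bailey lemma \eqref{baileypairm}, applied to the key Bailey pair \eqref{baileypairV}. Since \eqref{baileypairm} holds for an arbitrary Bailey pair $(\alpha_n(a,q),\beta_n(a,q))$, I would first invoke it with $a=1$ and with the base $q$ replaced by $q^2$, exactly the calibration under which \eqref{baileypairV} is a genuine Bailey pair. Under this specialization the weight $a^{n_1+\cdots+n_k}$ disappears, each $(q;q)_{n_i-n_{i-1}}$ becomes $(q^2;q^2)_{n_i-n_{i-1}}$, the prefactor $1/(aq;q)_\infty$ becomes $1/(q^2;q^2)_\infty$, and the quadratic weights transform as $q^{n_1^2+\cdots+n_k^2}\mapsto q^{2(n_1^2+\cdots+n_k^2)}$ on the left and $q^{kn^2}\mapsto q^{2kn^2}$ on the right.

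Next I would insert the explicit entries of \eqref{baileypairV}, namely $\beta_{n_1}(1,q^2)=\prod_{j=1}^{n_1}(1+2xq^{2j-1}+q^{4j-2})/(q^2;q^2)_{2n_1}$ into the multisum and $\alpha_n(1,q^2)=q^{n^2}(V_n(x)+V_{n-1}(x))$ into the single sum. The left-hand side then reproduces the iterated sum on the right of \eqref{mainthm3} term by term, since the factor $(q^2;q^2)_{2n_1}$ coming from $\beta_{n_1}$ merges with the cascade of $(q^2;q^2)_{n_i-n_{i-1}}$ factors to form precisely the denominator displayed there, while $q^{2(n_1^2+\cdots+n_k^2)}$ is exactly the recorded quadratic weight.

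Finally, on the right-hand side I would combine the two powers of $q$: the lemma contributes $q^{2kn^2}$ and the pair \eqref{baileypairV} already carries $q^{n^2}$ inside $\alpha_n$, so their product is $q^{(2k+1)n^2}$, yielding $\frac{1}{(q^2;q^2)_\infty}\sum_{n\geq 0}q^{(2k+1)n^2}(V_n(x)+V_{n-1}(x))$, which is the left-hand side of \eqref{mainthm3}. There is no genuine analytic obstacle here; the identity is a formal consequence of \eqref{baileypairm} and \eqref{baileypairV}. The only step demanding care is the bookkeeping of the base change $q\to q^2$, in particular recognizing that the lemma's weight $q^{kn^2}$ becomes $q^{2kn^2}$ and fuses with the intrinsic $q^{n^2}$ of the Bailey pair to produce the exponent $(2k+1)n^2$; getting this fusion right is what makes the single sum collapse to the stated form, and setting $k=1$ recovers Theorem \ref{mainthm} as a consistency check.
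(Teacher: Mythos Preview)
Your proposal is correct and matches the paper's own argument exactly: the paper simply states that \eqref{mainthm3} follows by inserting the key Bailey pair \eqref{baileypairV} into the iterated Bailey lemma \eqref{baileypairm} with $a=1$ and $q\to q^2$, and you have carried out precisely this substitution with the appropriate bookkeeping. Your observation that the $q^{2kn^2}$ from the lemma fuses with the intrinsic $q^{n^2}$ of $\alpha_n$ to give $q^{(2k+1)n^2}$, and that $k=1$ recovers Theorem~\ref{mainthm}, are the same checks the paper makes.
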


Obviously, when $k=1$, we are led to  Theorem \ref{mainthm} immediately. Note that the summation on the left hand side of the above identity can be obtained by substituting $q\to q^{\frac{2k+1}{3}}$ into the right hand side of identity \eqref{maini}. Therefore, based on Theorems 4.1--4.4 and by taking $x$ to be  $0$ (with $q^2\to q$), $1/2$ (or equivalently $-1/2$), $1$ (or equivalently $-1$), and $3/2$ (or equivalently $-3/2$), respectively, we can obtain the following identities immediately.

\begin{coro} We have
\begin{subequations}
\begin{align}
&\frac{(q^{4k+2},q^{4k+2},q^{8k+4};q^{8k+4})_\infty}{(q;q)_\infty} \nonumber\\
&\quad =\sum_{n_k\geq n_{k-1}\geq \cdots \geq n_1\geq 0} \frac{q^{n_1^2+\cdots +n_k^2}(-q;q^2)_{n_1}}{(q;q)_{n_k-n_{k-1}}(q;q)_{n_{k-1}-n_{k-2}}\cdots (q;q)_{n_2-n_1}(q;q)_{2n_1}}, \label{mainthm3-3}\\[8pt]
&\frac{(q^{4k+2};q^{4k+2})_\infty (q^{6k+3};q^{12k+6})_\infty}{(q^2;q^2)_\infty(q^{2k+1};q^{4k+2})_\infty} \nonumber\\[5pt]
&\quad =\sum_{n_k\geq n_{k-1}\geq \cdots \geq n_1\geq 0} \frac{q^{2(n_1^2+\cdots +n_k^2)}(q^3;q^6)_{n_1}}{(q^2;q^2)_{n_k-n_{k-1}}(q^2;q^2)_{n_{k-1}-n_{k-2}}\cdots (q^2;q^2)_{n_2-n_1}(q^2;q^2)_{2n_1}(q;q^2)_{n_1}},  \label{mainthm3-2}\\[8pt]
&\frac{(q^{2k+1},q^{2k+1},q^{4k+2};q^{4k+2})_\infty}{(q^2;q^2)_\infty} \nonumber\\[5pt]
&\quad =\sum_{n_k\geq n_{k-1}\geq \cdots \geq n_1\geq 0} \frac{q^{2(n_1^2+\cdots +n_k^2)}(q;q^2)_{n_1}^2}{(q^2;q^2)_{n_k-n_{k-1}}(q^2;q^2)_{n_{k-1}-n_{k-2}}\cdots (q^2;q^2)_{n_2-n_1}(q^2;q^2)_{2n_1}},  \label{mainthm3-1}\\[8pt]
&\frac{1}{(q^2;q^2)_\infty} \sum_{n\geq 0} q^{(2k+1)n^2} (F_{2n+1}+F_{2n-1})  \nonumber\\[5pt]
&\quad =\sum_{n_k\geq n_{k-1}\geq \cdots \geq n_1\geq 0} \frac{q^{2(n_1^2+\cdots +n_k^2)}\prod_{j=1}^{n_1} (1+3q^{2j-1}+q^{4j-2})}{(q^2;q^2)_{n_k-n_{k-1}}(q^2;q^2)_{n_{k-1}-n_{k-2}}\cdots (q^2;q^2)_{n_2-n_1}(q^2;q^2)_{2n_1}}.   \label{mainthm3-4}
\end{align}
\end{subequations}
\end{coro}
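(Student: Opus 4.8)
The plan is to read off all four identities from the $k$-fold identity \eqref{mainthm3} by specializing $x$ to the values recorded in Lemma \ref{specialv}, exactly as Theorems 4.1--4.4 were obtained from \eqref{maini} in the single-sum case $k=1$. Concretely, I would take $x=0$ for \eqref{mainthm3-3}, $x=\tfrac12$ for \eqref{mainthm3-2}, $x=-1$ for \eqref{mainthm3-1}, and $x=\tfrac32$ for \eqref{mainthm3-4}. On the right-hand side of \eqref{mainthm3} the only thing that happens is that the finite product $\prod_{j=1}^{n_1}(1+2xq^{2j-1}+q^{4j-2})$ collapses: at $x=0$ it is $(-q^2;q^4)_{n_1}$, at $x=-1$ it is $\prod_{j=1}^{n_1}(1-q^{2j-1})^2=(q;q^2)_{n_1}^2$, and at $x=\tfrac12$ the telescoping $1+q^{2j-1}+q^{4j-2}=(1-q^{6j-3})/(1-q^{2j-1})$ gives $(q^3;q^6)_{n_1}/(q;q^2)_{n_1}$, while at $x=\tfrac32$ no simplification occurs and $\prod_{j=1}^{n_1}(1+3q^{2j-1}+q^{4j-2})$ is left intact. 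These are precisely the multisums appearing in \eqref{mainthm3-3}--\eqref{mainthm3-4}, after the cosmetic replacement $q^2\to q$ in the $x=0$ case.

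The real content is the left-hand side, and here I would lean on the observation already flagged in the text: the theta series $\sum_{n\ge0}q^{(2k+1)n^2}\bigl(V_n(x)+V_{n-1}(x)\bigr)$ is obtained from the series $\sum_{n\ge0}q^{3n^2}\bigl(V_n(x)+V_{n-1}(x)\bigr)$ of \eqref{maini} by the substitution $q^3\mapsto q^{2k+1}$, i.e. $q\mapsto q^{(2k+1)/3}$. Consequently the closed-product evaluations carried out in the proofs of Theorems 4.1--4.4 transfer verbatim, with every modulus rescaled accordingly. For $x=-1$ the special value \eqref{V-1} folds the series into $\sum_{n=-\infty}^{\infty}(-1)^nq^{(2k+1)n^2}$, and Jacobi's triple product \eqref{jacobi} yields $(q^{2k+1},q^{2k+1},q^{4k+2};q^{4k+2})_\infty$; for $x=0$ the four-periodicity \eqref{V-0} collapses the series to a bilateral theta in $q^{8k+4}$, and the subsequent $q^2\to q$ produces the modulus $4k+2$ in \eqref{mainthm3-3}; for $x=\tfrac12$ the six-periodicity \eqref{V12} reproduces, after \eqref{jacobi}, the product $(q^{4k+2};q^{4k+2})_\infty(q^{6k+3};q^{12k+6})_\infty/(q^{2k+1};q^{4k+2})_\infty$; and for $x=\tfrac32$ the value \eqref{V23} leaves the Fibonacci sum $\sum_{n\ge0}q^{(2k+1)n^2}(F_{2n+1}+F_{2n-1})$ unevaluated, as in \eqref{mainthm3-4}. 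Dividing each of these by $(q^2;q^2)_\infty$ matches the stated left-hand sides.

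There is no conceptual obstacle; the work is entirely bookkeeping, and the one place that demands care is precisely the rescaling $q^3\mapsto q^{2k+1}$. I would verify once and for all that the residue-class splittings used to fold the one-sided sums into bilateral theta series in Theorems 4.1--4.4 depend only on the congruence class of $n$ and not on the modulus carried by $q$, so that they survive the substitution unchanged; the modulus-level scalings ($q^6\to q^{4k+2}$, $q^9\to q^{6k+3}$, $q^{18}\to q^{12k+6}$, and $q^3\to q^{2k+1}$ so that $(q^3;q^6)_\infty\to(q^{2k+1};q^{4k+2})_\infty$ in the $x=\tfrac12$ case, with the analogues elsewhere) then follow mechanically. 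The only genuinely special case is $x=0$, where one must not forget the extra $q^2\to q$ that sends modulus $8k+4$ to $4k+2$; keeping that substitution consistent across both sides is the sole pitfall worth flagging.
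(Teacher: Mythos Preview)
Your proposal is correct and follows essentially the same approach as the paper: specialize \eqref{mainthm3} at $x=0,\tfrac12,\pm1,\tfrac32$, simplify the finite product on the multisum side, and on the theta side invoke the substitution $q\mapsto q^{(2k+1)/3}$ in the evaluations already carried out in Theorems~4.1--4.4. Your choice of $x=-1$ rather than $x=1$ for \eqref{mainthm3-1} is the variant explicitly noted as equivalent in the paper, and your flag about keeping the $q^2\to q$ substitution consistent in the $x=0$ case is exactly the one piece of bookkeeping the paper singles out.
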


Specially, when $k=1$, the above four identities reduce to \eqref{Entry532}, \eqref{Entry534}, \eqref{x-1} and \eqref{x32}, respectively.

\section{The weak forms \eqref{baileypair2} and \eqref{baileypair3} of Bailey's Lemma}

In this section, we consider the applications of the weak forms \eqref{baileypair2} and \eqref{baileypair3} of Bailey's Lemma. Firstly, by inserting our key Bailey pair \eqref{baileypairV} into the second weak form \eqref{baileypair2}, it leads to the following result.

\begin{theorem} We have
\begin{equation}\label{RR2}
\sum_{n\geq 0} \frac{q^{n^2}(-q;q^2)_n\prod_{i=1}^n(1+2xq^{2i-1}+q^{4i-2})}{(q^2;q^2)_{2n}}
=\frac{(-q;q^2)_\infty}{(q^2;q^2)_\infty} \sum_{n\geq 0} q^{2n^2} \big(V_n(x)+V_{n-1}(x)\big).
\end{equation}
\end{theorem}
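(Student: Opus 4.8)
The plan is to obtain \eqref{RR2} as a direct specialization of the second weak form \eqref{baileypair2} of Bailey's Lemma applied to the key Bailey pair \eqref{baileypairV}, so that essentially nothing beyond bookkeeping of $q$-powers is required. The starting point is the observation already recorded after Theorem \ref{basici}: dividing \eqref{baileyid} by $(q^2;q^2)_{2n}$ and using ${2n\brack n-j}_{q^2}=(q^2;q^2)_{2n}/\big((q^2;q^2)_{n-j}(q^2;q^2)_{n+j}\big)$ rewrites that identity exactly in the shape of the defining relation \eqref{Baileyp} with $a=1$ and base $q^2$. Thus the sequences
\[
\alpha_n(1,q^2)=q^{n^2}\big(V_n(x)+V_{n-1}(x)\big),\qquad \beta_n(1,q^2)=\frac{\prod_{j=1}^n(1+2xq^{2j-1}+q^{4j-2})}{(q^2;q^2)_{2n}}
\]
form a Bailey pair relative to $a=1$ with base $q^2$, which is precisely the input that \eqref{baileypair2} demands.

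Next I would substitute this pair into \eqref{baileypair2}. Inserting $\beta_n(1,q^2)$ into the left-hand side reproduces verbatim the series
\[
\sum_{n\geq 0}\frac{q^{n^2}(-q;q^2)_n\prod_{i=1}^n(1+2xq^{2i-1}+q^{4i-2})}{(q^2;q^2)_{2n}},
\]
which is the left-hand side of \eqref{RR2}. For the right-hand side, inserting $\alpha_n(1,q^2)$ gives $\frac{(-q;q^2)_\infty}{(q^2;q^2)_\infty}\sum_{n\geq 0}q^{n^2}\alpha_n(1,q^2)$, and the only genuine step is to combine the prefactor $q^{n^2}$ coming from the weak form with the $q^{n^2}$ sitting inside $\alpha_n$ to produce $q^{2n^2}$, yielding $\frac{(-q;q^2)_\infty}{(q^2;q^2)_\infty}\sum_{n\geq 0}q^{2n^2}\big(V_n(x)+V_{n-1}(x)\big)$, which is the right-hand side of \eqref{RR2}. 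Since both sides match after substitution, the identity follows.

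There is no substantive obstacle here; the single point demanding care is the coexistence of the two bases. The external weights $q^{n^2}$ and $(-q;q^2)_n$ in \eqref{baileypair2} live in the original base $q$, whereas the Bailey pair itself is taken in base $q^2$, and the $q^{n^2}$ appearing inside $\alpha_n$ is likewise a power of the original $q$ rather than of $q^2$. Keeping these straight is exactly what guarantees that the two $q^{n^2}$ factors multiply to $q^{2n^2}$ as displayed, and one should verify that no spurious factor of $2$ enters the exponent from a misreading of the base before concluding the proof.
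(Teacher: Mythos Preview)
Your proposal is correct and matches the paper's own derivation exactly: the paper states only that the theorem follows ``by inserting our key Bailey pair \eqref{baileypairV} into the second weak form \eqref{baileypair2},'' which is precisely the substitution you carry out. Your closing remark about tracking the base is apt, since the defining relation of the pair lives in base $q^2$ while the entries $\alpha_n=q^{n^2}(V_n(x)+V_{n-1}(x))$ carry powers of the original $q$, and it is exactly this that makes the two $q^{n^2}$ factors combine into $q^{2n^2}$.
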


Employing the similar procedures as given in Section~4 and taking $x$ to be $-1, -\frac{1}{2}, 0, \frac{1}{2}, 1$ and $\frac{3}{2}$ (or equivalently, $-\frac{3}{2}$), respectively, the above identity reduces to the following  Rogers--Ramanujan type identities.

\begin{coro}\label{corb} We have
\begin{subequations}
\begin{align}
&\sum_{n\geq 0} \frac{q^{n^2}(q;q^2)_n}{(q^4;q^4)_{n}}=\frac{ (q^2;q^4)_\infty^2}{(q;q^2)_\infty},\label{b-1}\\[5pt]
&\sum_{n\geq 0} \frac{q^{n^2}(-q^3;q^6)_n}{(q^2;q^2)_{2n}}
=\frac{(-q;q)_\infty(-q^6;q^{12})_\infty}{(-q^2;q^4)_\infty},\label{b-12}\\[5pt]
&\sum_{n\geq 0} \frac{q^{n^2}(-q^2;q^4)_n}{(q;q^2)_n(q^4;q^4)_n}=\frac{(-q;q^2)_\infty(q^8;q^8)_\infty (q^{8};q^{16})_\infty }{(q^2;q^2)_\infty},\label{b0}\\[5pt]
&\sum_{n\geq 0} \frac{q^{n^2}(-q;q^2)_n(q^3;q^6)_n}{(q^2;q^2)_{2n}(q;q^2)_n}=\frac{(q^4;q^4)_\infty (q^{6};q^{12})_\infty }{(q;q)_\infty},\label{b12}\\[5pt]
&\sum_{n\geq 0} \frac{q^{n^2}(-q;q^2)_n^3}{(q^2;q^2)_{2n}}=\frac{(-q^2;q^4)_\infty^2 }{(q;q^2)_\infty},\label{b1}\\[5pt]
&\sum_{n\geq 0} \frac{q^{n^2}(-q;q^2)_n\prod_{i=1}^n(1+3q^{2i-1}+q^{4i-2})}{(q^2;q^2)_{2n}}
=\frac{(-q;q^2)_\infty}{(q^2;q^2)_\infty} \sum_{n\geq 0} q^{2n^2} (F_{2n+1}+F_{2n-1}).\label{b32}
\end{align}
\end{subequations}

\end{coro}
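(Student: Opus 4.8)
The plan is to obtain each of \eqref{b-1}--\eqref{b32} as a direct specialization of the master identity \eqref{RR2}, simplifying the two sides independently exactly in the spirit of Section~4. On the left-hand side I would first collapse the finite product $\prod_{i=1}^n(1+2xq^{2i-1}+q^{4i-2})$ for each value of $x$: at $x=-1$ and $x=1$ the quadratic factors become the perfect squares $(1-q^{2i-1})^2$ and $(1+q^{2i-1})^2$, giving $(q;q^2)_n^2$ and $(-q;q^2)_n^2$; at $x=0$ the factor $1+q^{4i-2}$ telescopes into $(-q^2;q^4)_n$; and at $x=\tfrac12$, $x=-\tfrac12$ one uses $1+y+y^2=(1-y^3)/(1-y)$ and $1-y+y^2=(1+y^3)/(1+y)$ with $y=q^{2i-1}$ to obtain $(q^3;q^6)_n/(q;q^2)_n$ and $(-q^3;q^6)_n/(-q;q^2)_n$ respectively. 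Combining these with the prefactor $(-q;q^2)_n/(q^2;q^2)_{2n}$ and invoking the standard splittings $(q^2;q^2)_{2n}=(q^2;q^4)_n(q^4;q^4)_n$ and $(-q;q^2)_n(q;q^2)_n=(q^2;q^4)_n$ delivers precisely the summands printed on the left of \eqref{b-1}--\eqref{b1}. For $x=\tfrac32$ no such collapse occurs, so the product is retained and \eqref{b32} is simply \eqref{RR2} rewritten, since $V_n(\tfrac32)+V_{n-1}(\tfrac32)=F_{2n+1}+F_{2n-1}$ by \eqref{V23}.

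On the right-hand side I would substitute the special values from Lemma~\ref{specialv} into $V_n(x)+V_{n-1}(x)$ and evaluate the resulting series. For $x=-1,0,1$ the coefficient is eventually constant or telescopes, producing after a mod-$4$ regrouping (as in the proof of \eqref{Entry532}) a plain bilateral theta series $\sum_{n=-\infty}^{\infty}\varepsilon^{\,n}q^{cn^2}$ with $(\varepsilon,c)=(-1,2),(-1,8),(1,2)$ respectively, to each of which Jacobi's triple product \eqref{jacobi} applies immediately. The two closed forms are then reconciled using only the elementary rearrangements $(q^2;q^2)_\infty=(q^2;q^4)_\infty(q^4;q^4)_\infty$, the relation $(-q;q^2)_\infty=(q^2;q^4)_\infty/(q;q^2)_\infty$, and Euler's identity $(-q;q)_\infty=1/(q;q^2)_\infty$.

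The step I expect to be the genuine obstacle is the pair $x=\pm\tfrac12$, where $V_n(\pm\tfrac12)+V_{n-1}(\pm\tfrac12)$ is honestly periodic with period $3$ for $x=-\tfrac12$ and period $6$ for $x=\tfrac12$, rather than constant. Here I would recognize the two coefficient patterns as $\omega^{\,n}+\omega^{-n}$ and $(-1)^n(\omega^{\,n}+\omega^{-n})=2\cos(\pi n/3)$ with $\omega=e^{2\pi i/3}$, and then exploit the evenness of $q^{2n^2}$ to fold the one-sided sums into the bilateral series $\sum_{n=-\infty}^{\infty}q^{2n^2}\omega^{\,n}$ and $\sum_{n=-\infty}^{\infty}(-1)^nq^{2n^2}\omega^{\,n}$, mirroring the congruence-mod-$3$ regrouping carried out for \eqref{Entry534}. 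Applying \eqref{jacobi} with the complex argument $z=q^2\omega$ produces products of the form $\prod_{n\geq 0}\bigl(1\pm q^{2+4n}+q^{4+8n}\bigr)$, which collapse through $1\pm y+y^2=(1\mp y^3)/(1\mp y)$ into theta quotients such as $(q^6;q^{12})_\infty/(q^2;q^4)_\infty$ and $(-q^6;q^{12})_\infty/(-q^2;q^4)_\infty$; matching these with the stated right-hand sides of \eqref{b-12} and \eqref{b12} again requires Euler's identity. The careful bookkeeping of signs and residues in this folding, together with the correct placement of the factor $(-1)^n$ only in the $x=\tfrac12$ case, is where any error is most likely to creep in.
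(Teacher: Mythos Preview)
Your proposal is correct and follows essentially the same route as the paper: specialize \eqref{RR2} at $x=-1,-\tfrac12,0,\tfrac12,1,\tfrac32$, collapse the finite product on the left, evaluate $V_n(x)+V_{n-1}(x)$ via Lemma~\ref{specialv} on the right, fold the one-sided sum into a bilateral theta series, and apply Jacobi's triple product (with a complex argument $z=\pm q^2e^{2\pi i/3}$ in the cases $x=\pm\tfrac12$, exactly as in the paper's treatment of \eqref{b-12}).

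The only organizational difference is that the paper does not redo the theta evaluations for $x=-1,0,\tfrac12,\tfrac32$; it observes that $\sum_{n\ge0}q^{2n^2}(V_n(x)+V_{n-1}(x))$ is obtained from $\sum_{n\ge0}q^{3n^2}(V_n(x)+V_{n-1}(x))$ by $q\to q^{2/3}$, and the latter was already computed in Section~4 (identities \eqref{x-1}, \eqref{Entry532}, \eqref{Entry534}, \eqref{x32}). It then gives detailed proofs only for \eqref{b-12} and \eqref{b1}. Your approach recomputes all six from scratch; this is slightly longer but entirely self-contained, and your identification of $V_n(x)+V_{n-1}(x)=2T_n(x)$ with $2\cos(2n\pi/3)$, $2\cos(n\pi/3)$, etc.\ makes the bilateral folding more transparent than the paper's case-by-case mod-$3$/mod-$6$ regrouping.
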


We remark  that  when $x=-1, 0, \frac{1}{2}$ and $\frac{3}{2}$, the summation on the right hand sides of \eqref{RR2}  can be deduced by setting $q\to q^{\frac{2}{3}}$ in \eqref{x-1}, \eqref{Entry532}, \eqref{Entry534} and \eqref{x32}, which leads to \eqref{b-1}, \eqref{b0}, \eqref{b12}, and \eqref{b32}, respectively.
Now, we show how to derive \eqref{b-12} and \eqref{b1} from \eqref{RR2}.

{\noindent \it Proof of \eqref{b-12} and \eqref{b1}.} Taking $x=-\frac{1}{2}$ in \eqref{RR2} and with the aid of \eqref{V-12}, we have
\begin{align*}
\sum_{n\geq 0} \frac{q^{n^2}(-q^3;q^6)_n}{(q^2;q^2)_{2n}}&= \frac{(-q;q^2)_\infty}{(q^2;q^2)_\infty} \sum_{n\geq 0} q^{2n^2} \big(V_n(-\frac{1}{2})+V_{n-1}(-\frac{1}{2})\big)\\
&=\frac{(-q;q^2)_\infty}{(q^2;q^2)_\infty} \Big(2\sum_{n\geq 0} q^{2(3n)^2}-\sum_{n\geq 0} q^{2(3n+1)^2}-\sum_{n\geq 0} q^{2(3n+2)^2} \Big)\\
&=\frac{(-q;q^2)_\infty}{(q^2;q^2)_\infty} \Big(\sum_{n=-\infty}^\infty q^{2(3n)^2}-\sum_{n=-\infty}^\infty q^{2(3n+1)^2} \Big)\\
&=\frac{(-q;q^2)_\infty}{(q^2;q^2)_\infty} \sum_{n=-\infty}^\infty q^{2n^2}e^{\frac{2n\pi i}{3}}\\
&=\frac{(-q;q^2)_\infty}{(q^2;q^2)_\infty} (-q^2e^{\frac{2\pi i}{3}},-q^2e^{-\frac{2\pi i}{3}},q^4;q^4)_\infty\\
&=\frac{(-q;q^2)_\infty}{(q^2;q^2)_\infty}\frac{(-q^6;q^{12})_\infty (q^4;q^4)_\infty}{(-q^2;q^4)_\infty},
\end{align*}
which completes the proof of identity \eqref{b-12} by simplification.

Taking $x=1$ in \eqref{RR2} and employing  \eqref{V1}, we obtain that
\begin{align*}
\sum_{n\geq 0} \frac{q^{n^2}(-q;q^2)_n^3}{(q^2;q^2)_{2n}}
&=\frac{(-q;q^2)_\infty}{(q^2;q^2)_\infty} \sum_{n\geq 0} q^{2n^2} \big(V_n(1)+V_{n-1}(1)\big) \\
&=\frac{(-q;q^2)_\infty}{(q^2;q^2)_\infty} \big(1+2\sum_{n\geq 1} q^{2n^2} \big)\\
&=\frac{(-q;q^2)_\infty}{(q^2;q^2)_\infty} \sum_{n=-\infty}^\infty  q^{2n^2},
\end{align*}
which implies \eqref{b1} by applying Jacobi's triple product identity \eqref{jacobi} and simplifying. \qed

For these identities, one can see that \eqref{b-1}
is contained in Slater's list \cite[(4)]{Sla52} with $q\to -q$ and  also can be found in \cite[P.10, (2.4.2)]{LSZ08}; the identity \eqref{b-12} is Entry 5.3.8 in Ramanujan's lost notebook \cite[P. 105]{AB09};
\eqref{b0} can be found  in \cite[P.21, (2.16.4)]{LSZ08} and \cite[P. 16, (5.5)]{Sil07}; \eqref{b12} is Entry 5.3.9 in \cite[P. 105]{AB09}. Specially, \eqref{b1}
seems to be new, which can be seen as a  missing member of modular 4 identities in Slater's list, see \cite[P. 153]{Sla52} and \cite[P. 11]{LSZ08}.

Next, we consider the application of the third weak form of Bailey's lemma. By fitting the Bailey pair \eqref{baileypairV} into the weak form \eqref{baileypair3} with $q\to q^2$, we deduce the following result.

\begin{theorem}\label{RRc} We have
\begin{equation}\label{RR3}
2\sum_{n\geq 0} \frac{(-1)^n\prod_{i=1}^n(1+2xq^{2i-1}+q^{4i-2})}{(q^4;q^4)_n}
=\frac{(q^2;q^4)_\infty}{(q^4;q^4)_\infty}\sum_{n\geq 0}(-1)^n q^{n^2} \big(V_n(x)+V_{n-1}(x)\big).
\end{equation}
\end{theorem}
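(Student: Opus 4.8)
The plan is to obtain \eqref{RR3} by a direct specialization of the third weak form \eqref{baileypair3} of Bailey's lemma, applied to the key Bailey pair \eqref{baileypairV} under the base change $q\to q^2$, followed by an elementary simplification of the $q$-products. Since \eqref{baileypairV} is a Bailey pair relative to $a=1$ with base $q^2$, I would first read off
\[
\alpha_n(1,q^2)=q^{n^2}\big(V_n(x)+V_{n-1}(x)\big),\qquad
\beta_n(1,q^2)=\frac{\prod_{j=1}^n(1+2xq^{2j-1}+q^{4j-2})}{(q^2;q^2)_{2n}}.
\]

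Next I would rewrite \eqref{baileypair3} with $q$ replaced by $q^2$ everywhere, which gives
\[
2\sum_{n\geq 0}(-1)^n(q^2;q^4)_n\,\beta_n(1,q^2)
=\frac{(q^2;q^4)_\infty}{(q^4;q^4)_\infty}\sum_{n\geq 0}(-1)^n\alpha_n(1,q^2).
\]
Substituting the two sequences above, the right-hand side becomes $\frac{(q^2;q^4)_\infty}{(q^4;q^4)_\infty}\sum_{n\geq 0}(-1)^n q^{n^2}\big(V_n(x)+V_{n-1}(x)\big)$, which is already exactly the right-hand side of \eqref{RR3}; no further manipulation is needed there.

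The only computation of substance is on the left-hand side, where the prefactor $(q^2;q^4)_n$ must be merged with the denominator $(q^2;q^2)_{2n}$ of $\beta_n$. The key observation is the splitting of $(q^2;q^2)_{2n}$ into its odd- and even-index factors,
\[
(q^2;q^2)_{2n}=\prod_{k=1}^{2n}(1-q^{2k})
=\prod_{m=1}^n(1-q^{4m-2})\prod_{m=1}^n(1-q^{4m})
=(q^2;q^4)_n\,(q^4;q^4)_n,
\]
so that $(q^2;q^4)_n/(q^2;q^2)_{2n}=1/(q^4;q^4)_n$. With this, the left-hand side collapses to
\[
2\sum_{n\geq 0}\frac{(-1)^n\prod_{i=1}^n(1+2xq^{2i-1}+q^{4i-2})}{(q^4;q^4)_n},
\]
which matches the left-hand side of \eqref{RR3} verbatim, completing the argument.

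I do not expect any real obstacle here: the entire proof is a substitution into \eqref{baileypair3} plus the factorization $(q^2;q^2)_{2n}=(q^2;q^4)_n(q^4;q^4)_n$. The only point requiring a moment's care is keeping the base change $q\to q^2$ consistent across both sides of the weak form, so that the product $(q;q^2)_n$ in \eqref{baileypair3} correctly becomes $(q^2;q^4)_n$ and the prefactor $\frac{(q;q^2)_\infty}{(q^2;q^2)_\infty}$ becomes $\frac{(q^2;q^4)_\infty}{(q^4;q^4)_\infty}$.
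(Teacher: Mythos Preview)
Your proposal is correct and follows exactly the paper's approach: the paper states the theorem as an immediate consequence of fitting the Bailey pair \eqref{baileypairV} into the weak form \eqref{baileypair3} with $q\to q^2$, and you have merely spelled out the one-line simplification $(q^2;q^2)_{2n}=(q^2;q^4)_n(q^4;q^4)_n$ that the paper leaves implicit.
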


Following the similar procedures as given in Corollary \ref{corb}, by setting $x=0, \frac{1}{2}$ (or equivalently, $-\frac{1}{2}$), $1$ (or equivalently, $-1$) and $\frac{3}{2}$ (or equivalently, $-\frac{3}{2}$) in \eqref{RR3}, we obtain the following Rogers--Ramanujan type identities as consequences of Theorem \ref{RRc}.

\begin{coro} We have
\begin{subequations}
\begin{align}
&2\sum_{n\geq 0} \frac{(-1)^n(-q;q^2)_n}{(q^2;q^2)_n}= (q, q^2, q^3;q^4)_\infty,\label{c0}\\
&2\sum_{n\geq 0} (-1)^n\frac{(-q^3;q^6)_n}{(q^4;q^4)_n(-q;q^2)_n}
=\frac{(q^2;q^4)_\infty^2(q^3;q^6)_\infty}{(q;q^2)_\infty},\label{c-12}\\
&2\sum_{n\geq 0} \frac{(-1)^n(-q;q^2)_n^2}{(q^4;q^4)_n}
=(q, q^2, q^3;q^4)_\infty^2, \label{c1} \\[5pt]
&2\sum_{n\geq 0} \frac{(-1)^n\prod_{i=1}^n(1+3q^{2i-1}+q^{4i-2})}{(q^4;q^4)_n}
=\frac{(q^2;q^4)_\infty}{(q^4;q^4)_\infty}\sum_{n\geq 0}(-1)^nq^{n^2}(F_{2n+1}+F_{2n-1}).\label{c32}
\end{align}
\end{subequations}

\end{coro}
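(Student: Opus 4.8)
The plan is to specialize the identity \eqref{RR3} of Theorem \ref{RRc} at the four values $x=0,\tfrac12,1,\tfrac32$ and, in each case, to evaluate the theta--type sum on the right using the explicit values of $V_n(x)$ recorded in Lemma \ref{specialv}. For every choice of $x$ the product $\prod_{i=1}^n(1+2xq^{2i-1}+q^{4i-2})$ on the left collapses to a $q$-shifted factorial: at $x=1$ it is $(-q;q^2)_n^2$, at $x=0$ it is $(-q^2;q^4)_n$, and at $x=\tfrac12$ the factorization $1+q^{2i-1}+q^{4i-2}=\frac{1-q^{6i-3}}{1-q^{2i-1}}$ turns it into $(q^3;q^6)_n/(q;q^2)_n$; at $x=\tfrac32$ it is left intact. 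This produces the summands appearing on the left of \eqref{c0}--\eqref{c32}, up to the elementary rescalings noted below.

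First I would dispose of the two easy cases. For $x=\tfrac32$ the value $V_n(\tfrac32)=F_{2n+1}$ from \eqref{V23} gives $V_n(\tfrac32)+V_{n-1}(\tfrac32)=F_{2n+1}+F_{2n-1}$, so \eqref{c32} is nothing but \eqref{RR3} written out, with no further evaluation of the right-hand sum. For $x=1$ the constant value $V_n(1)=1$ from \eqref{V1} yields $V_n(1)+V_{n-1}(1)=2$ for $n\ge1$ and $1$ for $n=0$, so the right-hand sum folds into the bilateral series $\sum_{n=-\infty}^\infty(-1)^nq^{n^2}$; Jacobi's triple product \eqref{jacobi} evaluates this as $(q;q^2)_\infty^2(q^2;q^2)_\infty$, and a short simplification against the prefactor $(q^2;q^4)_\infty/(q^4;q^4)_\infty$ produces $(q,q^2,q^3;q^4)_\infty^2$, which is \eqref{c1}.

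The case $x=0$ proceeds the same way. The values \eqref{V-0} make $V_n(0)+V_{n-1}(0)$ vanish for odd $n$ and equal $\pm2$ (with an exceptional $1$ at $n=0$) for even $n$, so the $(-1)^n$ weight disappears and the sum assembles into $\sum_{n=-\infty}^\infty(-1)^nq^{4n^2}$, which \eqref{jacobi} turns into $(q^4;q^8)_\infty^2(q^8;q^8)_\infty$. After cancelling against the prefactor and finally replacing $q$ by $q^{1/2}$ one reaches \eqref{c0}.

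The genuinely delicate case is $x=\tfrac12$, giving \eqref{c-12}. Here \eqref{V12} makes $V_n(\tfrac12)$ periodic of period six, so the weighted coefficients $(-1)^n\big(V_n(\tfrac12)+V_{n-1}(\tfrac12)\big)$ form a period-three pattern equal to $2$ when $3\mid n$ and $-1$ when $3\nmid n$. The key observation is that for $n\ge1$ this pattern is exactly $\omega^n+\omega^{-n}$ with $\omega=e^{2\pi i/3}$, while the $n=0$ term contributes $1=\omega^0$; hence the right-hand sum is the bilateral theta series $\sum_{n=-\infty}^\infty q^{n^2}e^{2n\pi i/3}$, just as in the proofs of \eqref{Entry534} and \eqref{b12}. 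Applying \eqref{jacobi} with the complex argument $z=-q\,e^{2\pi i/3}$ gives $(q^2;q^2)_\infty(-q\omega;q^2)_\infty(-q\omega^{-1};q^2)_\infty$, and the conjugate pair of factors combines, via $\omega+\omega^{-1}=-1$, into $\prod_{k\ge0}(1-q^{2k+1}+q^{4k+2})=(-q^3;q^6)_\infty/(-q;q^2)_\infty$. Simplifying against the prefactor and then substituting $q\to-q$ yields \eqref{c-12}. I expect this root-of-unity evaluation, together with tracking the $n=0$ boundary term and arranging the final $q\to-q$ rescaling, to be the main obstacle; the remaining cases are routine once the products and the special values from Lemma \ref{specialv} are inserted. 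Finally, the stated equivalences ($x=-\tfrac12\sim\tfrac12$, $x=-1\sim1$, $x=-\tfrac32\sim\tfrac32$) follow from the symmetry of \eqref{RR3} under $q\to-q$ together with \eqref{eqLF}.
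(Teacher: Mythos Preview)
Your proposal is correct and follows essentially the same approach as the paper. The paper gives no detailed proof of this corollary, merely stating that the identities follow from specializing \eqref{RR3} at the indicated values of $x$ ``following the similar procedures as given in Corollary~\ref{corb}''; you have correctly carried out those procedures, including the root-of-unity evaluation for $x=\pm\tfrac12$ that parallels the paper's proofs of \eqref{Entry534} and \eqref{b-12}.
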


Note that among the above identities, \eqref{c1} can be seen as a special case of Entry 5.3.10 in \cite[p. 106]{AB09} with $a=1$. Moreover, we can see that \eqref{c0} is similar with the modular 4 identities in Slater's list, see \cite[P. 153]{Sla52} and \cite[P. 11]{LSZ08}, and \eqref{c-12}  can be seen as  a missing member in Slater's list of the modular 12 identities, see also   \cite[P.24]{LSZ08}.

\section{The weak form \eqref{baileypair4} of Bailey's Lemma and Appell--Lerch series}

In this section,  we will study the application of the weak form \eqref{baileypair4} of Bailey's Lemma in deriving identities on Appell--Lerch serires.

Recall that Appell--Lerch series are of the following form
\begin{equation}\label{appler}
\sum_{n=-\infty}^\infty \frac{(-1)^{\ell n}q^{\ell n(n+1)/2}b^n}{1-aq^n},
\end{equation}
which was first studied by Appell \cite{App846} and Lerch \cite{Ler92}. After multiplying the series \eqref{appler} by the factor $a^{\ell/2}$ and viewing it as function in the variables $a$, $b$ and $q$, it is also refereed as an Appell function of level $\ell$.

By inserting the Bailey pair \eqref{baileypairV} into the weak form  \eqref{baileypair4} with $q\to q^2$, we obtain the following result, from which some identities involving Appell--Lerch series are derived.

\begin{theorem} We have
\begin{align}\label{mainthmd}
\sum_{n\geq 0} &\frac{q^{n^2+n}(-1;q^2)_n \prod_{i=1}^n(1+2xq^{2i-1}+q^{4i-2})}{(q^2;q^2)_{2n}}
\nonumber\\[5pt]
&\quad =\frac{(-q^2;q^2)_\infty}{(q^2;q^2)_{\infty}}\sum_{n\geq 0}\frac{(-1;q^2)_n q^{2n^2+n}}{(-q^2;q^2)_n} \big(V_n(x)+V_{n-1}(x)\big).
\end{align}

\end{theorem}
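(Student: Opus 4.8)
The plan is to obtain \eqref{mainthmd} by a direct substitution of the key Bailey pair \eqref{baileypairV} into the fourth weak form \eqref{baileypair4} of Bailey's lemma, in exactly the same spirit as the derivations carried out in Sections~4 and~5 for the weak forms \eqref{baileypair1}--\eqref{baileypair3}. Recall that \eqref{baileypairV}, established via Theorem \ref{basici}, furnishes with respect to $a=1$ and base $q^2$ the identifications
\[
\alpha_n(1,q^2)=q^{n^2}\big(V_n(x)+V_{n-1}(x)\big), \qquad \beta_n(1,q^2)=\frac{\prod_{j=1}^n (1+2xq^{2j-1}+q^{4j-2})}{(q^2;q^2)_{2n}}.
\]

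First I would rewrite the weak form \eqref{baileypair4} under the replacement $q\to q^2$, so that it becomes
\[
\sum_{n\geq 0} q^{n(n+1)}(-1;q^2)_n \beta_n(1,q^2)=\frac{(-q^2;q^2)_\infty}{(q^2;q^2)_\infty} \sum_{n\geq 0} \frac{q^{n(n+1)}(-1;q^2)_n}{(-q^2;q^2)_n} \alpha_n(1,q^2),
\]
the key bookkeeping point being that the weight $q^{n(n+1)/2}$ of the original weak form turns into $q^{n(n+1)}=q^{n^2+n}$ after the base change. Substituting $\beta_n(1,q^2)$ on the left then immediately reproduces the left side of \eqref{mainthmd}. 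On the right, the factor $\alpha_n(1,q^2)=q^{n^2}\big(V_n(x)+V_{n-1}(x)\big)$ contributes an additional power $q^{n^2}$, which combines with the weight $q^{n^2+n}$ to give the total exponent $q^{2n^2+n}$, yielding precisely the right side of \eqref{mainthmd}.

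The main (and essentially only) obstacle is the careful tracking of exponents through the base change $q\to q^2$: one must retain the literal $q^{n^2}$ that appears in $\alpha_n(1,q^2)$ rather than mistakenly reading it as $(q^2)^{n^2}$, and then combine it correctly with the transformed weight $q^{n^2+n}$. Since the validity of the Bailey pair \eqref{baileypairV} has already been secured in Theorem \ref{basici}, and since we may freely invoke the weak form \eqref{baileypair4} for any Bailey pair, no further computation is required beyond this bookkeeping.
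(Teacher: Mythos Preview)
Your proposal is correct and follows exactly the paper's approach: the paper simply states that \eqref{mainthmd} is obtained ``by inserting the Bailey pair \eqref{baileypairV} into the weak form \eqref{baileypair4} with $q\to q^2$,'' and your argument spells out precisely this substitution, with the correct combination $q^{n^2+n}\cdot q^{n^2}=q^{2n^2+n}$ on the right side.
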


By taking $x=1$ (or equivalently, $x=-1$),  in the above identity, we obtain the following result.

\begin{coro} We have
\begin{equation}\label{d-1}
\sum_{n\geq 0} \frac{q^{n^2+n}(-1;q^2)_n(-q;q^2)_n^2}{ (q^{2};q^2)_{2n}}
=2\frac{(-q^2;q^2)_\infty}{(q^2;q^2)_{\infty}}\sum_{n=-\infty}^\infty \frac{ q^{2n^2+n}}{1+q^{2n}}.
\end{equation}
\end{coro}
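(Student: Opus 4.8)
The plan is to specialize the master identity \eqref{mainthmd} at $x=1$ and then recognize the resulting one-sided sum as a bilateral Appell--Lerch series by a reflection $n\mapsto -n$. First I would set $x=1$ on the left-hand side of \eqref{mainthmd}. Since $1+2q^{2i-1}+q^{4i-2}=(1+q^{2i-1})^2$, the product collapses into $\prod_{i=1}^n(1+q^{2i-1})^2=(-q;q^2)_n^2$, so the left-hand side becomes exactly the left-hand side of \eqref{d-1}.

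Next I would simplify the right-hand side. By \eqref{V1} we have $V_n(1)=1$ for all $n\ge0$ (with $V_{-1}(1)=0$ by convention), so $V_n(1)+V_{n-1}(1)$ equals $1$ when $n=0$ and $2$ when $n\ge1$. The key algebraic simplification is the factorial identity $(-1;q^2)_n=2(-q^2;q^2)_{n-1}$ for $n\ge1$, which gives
\[
\frac{(-1;q^2)_n}{(-q^2;q^2)_n}=\frac{2}{1+q^{2n}}\qquad(n\ge1),
\]
and one checks this same formula holds at $n=0$. Substituting these into the right-hand side of \eqref{mainthmd} turns it into
\[
\frac{(-q^2;q^2)_\infty}{(q^2;q^2)_\infty}\left(1+\sum_{n\ge1}\frac{4\,q^{2n^2+n}}{1+q^{2n}}\right),
\]
where the isolated $1$ comes from the exceptional $n=0$ term.

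The final and only genuinely nontrivial step is to match this one-sided expression against the bilateral series in \eqref{d-1}. I would split $\sum_{n=-\infty}^\infty \frac{q^{2n^2+n}}{1+q^{2n}}$ into its central $n=0$ term (which equals $\tfrac12$), its $n\ge1$ tail, and its $n\le-1$ tail, and then apply $n\mapsto -n$ to the negative tail; multiplying numerator and denominator by $q^{2n}$ shows the negative tail equals the positive tail term by term. Hence the full bilateral sum equals $\tfrac12+2\sum_{n\ge1}\frac{q^{2n^2+n}}{1+q^{2n}}$, and doubling it reproduces $1+\sum_{n\ge1}\frac{4\,q^{2n^2+n}}{1+q^{2n}}$ exactly, which yields \eqref{d-1}. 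The main obstacle is purely bookkeeping: one must verify that the exceptional $n=0$ term on the right of \eqref{mainthmd} (where $V_0+V_{-1}=1$ rather than $2$) is precisely absorbed by the factor $\tfrac12$ contributed by the central term of the bilateral sum, so that the reflection symmetry assembles cleanly with no leftover correction.
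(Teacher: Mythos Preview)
Your proposal is correct and follows essentially the same approach as the paper: specialize \eqref{mainthmd} at $x=1$, use $V_n(1)=1$ to get $V_n(1)+V_{n-1}(1)=2$ for $n\ge1$ and $1$ for $n=0$, simplify $\dfrac{(-1;q^2)_n}{(-q^2;q^2)_n}=\dfrac{2}{1+q^{2n}}$, and then fold the one-sided sum into the bilateral Appell--Lerch series via $n\mapsto-n$. The paper's proof even contains two small typos (it writes ``$x=-1$'' and cites \eqref{V-1}) that you have correctly avoided by working at $x=1$ with \eqref{V1}.
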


\proof The left hand side of \eqref{d-1} can be obtained directly by setting $x=-1$ in \eqref{mainthmd}. For the summation on the right hand side of \eqref{mainthmd},  with the aid of \eqref{V-1},  we see that
\begin{align*}
& \sum_{n\geq 0}\frac{(-1;q^2)_n q^{2n^2+n}}{(-q^2;q^2)_n} \big(V_n(1)+V_{n-1}(1)\big)\\[5pt]
&\qquad= 1+2\sum_{n\geq 1}\frac{(-1;q^2)_n q^{2n^2+n}}{(-q^2;q^2)_n}\\[5pt]
&\qquad = 1+4\sum_{n\geq 1}\frac{ q^{2n^2+n}}{1+q^{2n}}\\
&\qquad = 2 \Big(\frac{1}{2}+\sum_{n\geq 1}\frac{  q^{2n^2+n}}{1+q^{2n}}+\sum_{n=-\infty}^{-1}\frac{ q^{2n^2+n}}{1+q^{2n}}\Big)\\
&\qquad = 2 \sum_{n=-\infty}^{\infty}\frac{  q^{2n^2+n}}{1+q^{2n}},
\end{align*}
which completes the proof. \qed

It is notable that the summation on the right hand side of the above identity is closely related to the mock theta function of order 2, which is given by
\[
\mu^{(2)}(q)=2\frac{(q;q^2)_\infty}{(q^2;q^2)_\infty} \sum_{n=-\infty}^\infty \frac{q^{2n^2+n}}{1+q^{2n}}=\sum_{n=0}^\infty \frac{(-1)^n(q;q^2)_nq^{n^2}}{(-q^2;q^2)_n^2},
\]
see \cite{McI07, McI18}.
From \eqref{d-1}, we obtain another expression of $\mu^{(2)}(q)$ as follows
\begin{equation}\label{mock2}
\mu^{(2)}(q)=\frac{(q;q^2)_\infty}{(-q^2;q^2)_\infty}
\sum_{n=0}^\infty \frac{(-1;q^2)_n(-q;q^2)_n^2 q^{n^2+n}}{ (q^{2};q^2)_{2n}}.
\end{equation}

In \eqref{mainthmd}, by substituting $x=-\frac{1}{2}$ (or equivalently, $x=\frac{1}{2}$), we obtain the following result.

\begin{coro} We have
\begin{equation}\label{d-2}
\sum_{n\geq 0} \frac{q^{n^2+n}(-1;q^2)_n(-q^3;q^6)_n }{ (q^{2};q^2)_{2n}(-q;q^2)_n}
=2\frac{(-q^2;q^2)_\infty}{(q^2;q^2)_{\infty}}\sum_{n=-\infty}^\infty \frac{e^{\frac{2n\pi i}{3}}q^{2n^2+n}}{1+q^{2n}}.
\end{equation}
\end{coro}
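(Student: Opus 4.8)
The plan is to specialize the master identity \eqref{mainthmd} at $x=-\tfrac12$ and to simplify the two sides independently. On the left the substitution turns the product $\prod_{i=1}^n(1+2xq^{2i-1}+q^{4i-2})$ into $\prod_{i=1}^n(1-q^{2i-1}+q^{4i-2})$, while on the right it forces the Chebyshev values to be read off from the special evaluation \eqref{V-12}. Both sides then collapse to the expressions in \eqref{d-2} after elementary manipulations.

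First I would treat the left-hand product. Writing $t=q^{2i-1}$ and using the factorization $1-t+t^2=\dfrac{1+t^3}{1+t}$ (sum of cubes), each factor becomes $\dfrac{1+q^{6i-3}}{1+q^{2i-1}}$, so that
\[
\prod_{i=1}^n(1-q^{2i-1}+q^{4i-2})=\prod_{i=1}^n\frac{1+q^{6i-3}}{1+q^{2i-1}}=\frac{(-q^3;q^6)_n}{(-q;q^2)_n}.
\]
This reproduces exactly the numerator and denominator appearing on the left of \eqref{d-2}, so the left-hand side is settled. Next I would handle the right-hand side. Using \eqref{V-12} one finds that $V_n(-\tfrac12)+V_{n-1}(-\tfrac12)$ equals $1$ at $n=0$, equals $2$ when $n\equiv 0\pmod 3$ with $n\geq 1$, and equals $-1$ when $n\equiv 1,2\pmod 3$. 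Simultaneously, since $(-1;q^2)_n=2(-q^2;q^2)_{n-1}$ for $n\geq 1$, the ratio collapses to $\dfrac{(-1;q^2)_n}{(-q^2;q^2)_n}=\dfrac{2}{1+q^{2n}}$ for every $n\geq 0$. Substituting these two facts, the sum on the right of \eqref{mainthmd} becomes the unilateral series $2\sum_{n\geq 0}\dfrac{q^{2n^2+n}}{1+q^{2n}}\,\big(V_n(-\tfrac12)+V_{n-1}(-\tfrac12)\big)$, whose coefficients are precisely the values just tabulated.

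The decisive step is to recognize this one-sided series as half of a symmetric bilateral Appell--Lerch sum. Setting $f(n)=\dfrac{q^{2n^2+n}}{1+q^{2n}}$, a short computation gives the reflection symmetry $f(-n)=f(n)$. Pairing $n$ with $-n$ and using $e^{2n\pi i/3}+e^{-2n\pi i/3}=2\cos(2n\pi/3)$, which takes the values $2,-1,-1$ according to $n\equiv 0,1,2\pmod 3$, I would match these cosine weights against the tabulated Chebyshev coefficients and fold the unilateral sum into $2\sum_{n=-\infty}^{\infty}e^{2n\pi i/3}f(n)$; here the $n=0$ term contributes $2f(0)=1$, matching the isolated $v_0=V_0(-\tfrac12)$ contribution. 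Carrying the prefactor $\dfrac{(-q^2;q^2)_\infty}{(q^2;q^2)_\infty}$ along then yields \eqref{d-2}. The main obstacle is exactly this folding: one must verify both the reflection symmetry of $f$ and that the cube-root-of-unity weights reproduce the coefficients $V_n(-\tfrac12)+V_{n-1}(-\tfrac12)$, taking due care of the exceptional $n=0$ term. This mirrors, with a cube-root-of-unity twist, the argument already used to establish \eqref{d-1}, and parallels the appearance of $e^{2n\pi i/3}$ in the proofs of \eqref{p534-1} and \eqref{b-12}.
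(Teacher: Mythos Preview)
Your proposal is correct and follows essentially the same approach as the paper: both specialize \eqref{mainthmd} at $x=-\tfrac12$, simplify the left-hand product via $1-t+t^2=(1+t^3)/(1+t)$, read off the Chebyshev values from \eqref{V-12}, and then fold the unilateral sum into a bilateral Appell--Lerch series using the symmetry $f(-n)=f(n)$ together with the cube-root-of-unity structure. The only organizational difference is that the paper first splits the sum explicitly into residue classes modulo~$3$ and folds each separately before recombining via $e^{2n\pi i/3}$, whereas you recognize directly that $V_n(-\tfrac12)+V_{n-1}(-\tfrac12)=2\cos(2n\pi/3)$ for $n\geq 1$ and fold in one stroke; this is the same mechanism presented more compactly.
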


\proof It is direct to obtain the left hand side of \eqref{d-2} by setting $x=-\frac{1}{2}$ in \eqref{mainthmd}. For the summation on the right hand side, by using \eqref{V-12}, we have that
\begin{align*}
& \sum_{n\geq 0}\frac{q^{2n^2+n}(-1;q^2)_n }{(-q^2;q^2)_n} \Big(V_n\big(-\frac{1}{2}\big)+V_{n-1}\big(-\frac{1}{2}\big)\Big)\\[5pt]
&\qquad = 1+4\sum_{n\geq 1}\frac{ q^{2(3n)^2+3n}}{1+q^{6n}}-2\sum_{n\geq 0}\frac{ q^{2(3n+1)^2+3n+1}}{1+q^{2(3n+1)}}-2\sum_{n\geq 0}\frac{ q^{2(3n+2)^2+3n+2}}{1+q^{2(3n+2)}}\\[5pt]
&\qquad = 2\sum_{n=-\infty}^\infty \frac{ q^{2(3n)^2+3n}}{1+q^{6n}}-2\sum_{n\geq 0}\frac{ q^{2(3n+1)^2+3n+1}}{1+q^{2(3n+1)}}-2\sum_{n=-\infty}^{-1} \frac{ q^{2(3n+1)^2+3n+1}}{1+q^{2(3n+1)}}\\[5pt]
&\qquad =  2\sum_{n=-\infty}^{\infty} \frac{ q^{2(3n)^2+3n}}{1+q^{6n}}-2\sum_{n=-\infty}^{\infty} \frac{ q^{2(3n+1)^2+3n+1}}{1+q^{2(3n+1)}}\\[5pt]
&\qquad =  2 \sum_{n=-\infty}^\infty \frac{e^{\frac{2n\pi i}{3}}q^{2n^2+n}}{1+q^{2n}},
\end{align*}
where the last step follows by considering the
remainder classes of $n$ module $3$.  \qed

Taking  $x=0$ in \eqref{mainthmd} and then setting $q^2\to q$, we obtain the following identity involving Apell--Lerch series.

\begin{coro} We have
\begin{equation}\label{d-3}
\sum_{n\geq 0} \frac{(-1;q)_{n}(-q;q^2)_n q^{\frac{n^2+n}{2}}}{ (q;q)_{2n}}
=2\frac{(-q;q)_\infty}{(q;q)_{\infty}}\sum_{n=-\infty}^\infty \frac{(-1)^nq^{4n^2+n}}{1+q^{2n}}.
\end{equation}
\end{coro}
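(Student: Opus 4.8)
The plan is to deduce \eqref{d-3} from the master identity \eqref{mainthmd} by specializing $x=0$, exactly as the two preceding corollaries specialize $x=\pm1$ and $x=\pm\tfrac12$. First I would set $x=0$ in \eqref{mainthmd}. On the left-hand side the product collapses, since $\prod_{i=1}^n(1+q^{4i-2})=(-q^2;q^4)_n$, giving
\[
\sum_{n\geq 0}\frac{q^{n^2+n}(-1;q^2)_n(-q^2;q^4)_n}{(q^2;q^2)_{2n}}.
\]
On the right-hand side the task is to evaluate $\sum_{n\geq 0}\frac{(-1;q^2)_n q^{2n^2+n}}{(-q^2;q^2)_n}\big(V_n(0)+V_{n-1}(0)\big)$ using the special value \eqref{V-0}.

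The first key computation is $V_n(0)+V_{n-1}(0)$. Running through the residues of $n$ modulo $4$ via \eqref{V-0}, one finds that this sum vanishes whenever $n$ is odd, equals $2$ when $n\equiv 0\pmod4$, and equals $-2$ when $n\equiv 2\pmod4$; writing $n=2m$ this is precisely $2(-1)^m$, so only even indices survive. Combined with the simplification $\frac{(-1;q^2)_n}{(-q^2;q^2)_n}=\frac{2}{1+q^{2n}}$ valid for $n\geq1$ (already used in the previous corollaries), the right-hand sum reduces to
\[
1+4\sum_{m\geq 1}\frac{(-1)^m q^{8m^2+2m}}{1+q^{4m}},
\]
where the isolated $1$ is the $n=0$ term.

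The main technical point is the symmetrization of this one-sided sum into the bilateral Appell--Lerch series. Setting $f(m)=\frac{(-1)^m q^{8m^2+2m}}{1+q^{4m}}$, a short manipulation (multiplying the numerator and denominator of $f(-m)$ by $q^{4m}$) shows $f(-m)=f(m)$, while $f(0)=\tfrac12$. Hence $2\sum_{m=-\infty}^{\infty}f(m)=2f(0)+4\sum_{m\geq1}f(m)=1+4\sum_{m\geq1}f(m)$, so the isolated constant $1$ is exactly what is needed to fold the tail into a complete bilateral sum; the right-hand side of \eqref{mainthmd} at $x=0$ becomes $2\frac{(-q^2;q^2)_\infty}{(q^2;q^2)_\infty}\sum_{m=-\infty}^{\infty}\frac{(-1)^m q^{8m^2+2m}}{1+q^{4m}}$. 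Finally I would replace $q^2$ by $q$ throughout: on the left this turns $(-1;q^2)_n,(-q^2;q^4)_n,(q^2;q^2)_{2n},q^{n^2+n}$ into $(-1;q)_n,(-q;q^2)_n,(q;q)_{2n},q^{(n^2+n)/2}$, and on the right it sends $q^{8m^2+2m}/(1+q^{4m})$ to $q^{4m^2+m}/(1+q^{2m})$, yielding \eqref{d-3} verbatim. I expect the only delicate steps to be the residue bookkeeping for $V_n(0)+V_{n-1}(0)$ and the matching of the standalone constant with the $m=0$ contribution in the symmetrization; everything else is routine substitution.
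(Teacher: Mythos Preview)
Your proposal is correct and follows essentially the same route as the paper: specialize $x=0$ in \eqref{mainthmd}, use \eqref{V-0} to see that only even indices contribute, simplify $\frac{(-1;q^2)_n}{(-q^2;q^2)_n}$, symmetrize into a bilateral Appell--Lerch sum, and replace $q^2$ by $q$. The only cosmetic difference is that the paper splits the surviving terms into the residue classes $n\equiv 0$ and $n\equiv 2\pmod 4$, symmetrizes each separately, and then recombines them with a sign, whereas you write $n=2m$ with sign $(-1)^m$ from the outset and symmetrize once; your bookkeeping is a bit more direct but the argument is the same.
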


\proof When $x=0$ in \eqref{mainthmd}, by using \eqref{V-0}, the summation on the right hand side becomes
\begin{align*}
& \sum_{n\geq 0}\frac{q^{2n^2+n}(-1;q^2)_n }{(-q^2;q^2)_n} \big(V_n(0)+V_{n-1}(0)\big)\\[5pt]
&\qquad =1+4 \sum_{n\geq 1} \frac{ q^{2(4n)^2+4n}}{1+q^{8n}}-4\sum_{n\geq 0}\frac{ q^{2(4n+2)^2+4n+2}}{1+q^{2(4n+2)}}\\[5pt]
&\qquad = 2\sum_{n=-\infty}^{\infty} \frac{ q^{2(4n)^2+4n}}{1+q^{8n}}-2\sum_{n\geq 0}\frac{ q^{2(4n+2)^2+4n+2}}{1+q^{2(4n+2)}}-2\sum_{n=-\infty}^{-1} \frac{ q^{2(4n+2)^2+4n+2}}{1+q^{2(4n+2)}}\\[5pt]
&\qquad = 2\sum_{n=-\infty}^{\infty} \frac{ q^{2(4n)^2+4n}}{1+q^{8n}}-2\sum_{n=-\infty}^\infty \frac{ q^{2(4n+2)^2+4n+2}}{1+q^{2(4n+2)}}\\[5pt]
&\qquad = 2\sum_{n=-\infty}^{\infty} (-1)^n \frac{ q^{2(2n)^2+2n}}{1+q^{4n}}.
\end{align*}
Then the proof is complete by replacing  $q^2$ with $q$. \qed

When $x=\frac{3}{2}$ (or equivalently, $x=-\frac{3}{2}$)  in \eqref{mainthmd}, we obtain the following result.
\begin{coro} We have
\begin{equation}\label{d-4}
\sum_{n\geq 0} \frac{(-1;q^2)_n\prod_{i=1}^n (1+3q^{2i-1}+q^{4i-2})}{ (q^{2};q^2)_{2n}}
=2\frac{(-q^2;q^2)_\infty}{(q^2;q^2)_{\infty}} \sum_{n\geq 0} \frac{ q^{2n^2+n}}{1+q^{2n}}\big(F_{2n+1}+F_{2n-1}\big).
\end{equation}
\end{coro}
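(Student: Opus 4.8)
The plan is to obtain \eqref{d-4} as the $x=\tfrac{3}{2}$ specialization of \eqref{mainthmd}, in exactly the spirit of the derivations of \eqref{d-1}, \eqref{d-2} and \eqref{d-3}. First I would set $x=\tfrac{3}{2}$ in \eqref{mainthmd}; then $2x=3$, so the product appearing on the left becomes $\prod_{i=1}^n(1+3q^{2i-1}+q^{4i-2})$, which is the left-hand side of \eqref{d-4}. This part is a pure substitution and needs no computation.

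The work lies in converting the right-hand sum of \eqref{mainthmd} into the stated Appell--Lerch series. I would first apply the special value \eqref{V23}, $V_n(\tfrac{3}{2})=F_{2n+1}$, so that $V_n(\tfrac{3}{2})+V_{n-1}(\tfrac{3}{2})=F_{2n+1}+F_{2n-1}$ for $n\ge 1$, while the $n=0$ term equals $V_0(\tfrac{3}{2})+V_{-1}(\tfrac{3}{2})=1$ under the convention $V_{-1}=0$. The decisive simplification is to rewrite the coefficient $\dfrac{(-1;q^2)_n}{(-q^2;q^2)_n}$: using $(-1;q^2)_n=2(-q^2;q^2)_{n-1}$ for $n\ge 1$, this ratio telescopes to $\dfrac{2}{1+q^{2n}}$. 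Hence $\dfrac{(-1;q^2)_n\,q^{2n^2+n}}{(-q^2;q^2)_n}\bigl(V_n(\tfrac{3}{2})+V_{n-1}(\tfrac{3}{2})\bigr)=\dfrac{2q^{2n^2+n}}{1+q^{2n}}\bigl(F_{2n+1}+F_{2n-1}\bigr)$, and pulling the factor $2$ in front of the sum reproduces precisely the right-hand side of \eqref{d-4}.

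The only point requiring care is the treatment of the $n=0$ term. I would check that the uniform expression $\dfrac{2q^{2n^2+n}}{1+q^{2n}}$ is correct there as well: at $n=0$ it equals $1$, which matches $V_0(\tfrac{3}{2})+V_{-1}(\tfrac{3}{2})=1$, so the sum can be written in closed form without isolating the initial term. No further machinery is needed; unlike \eqref{d-1}--\eqref{d-3}, I do not expect any theta-function collapse here, because the Fibonacci weights $F_{2n+1}+F_{2n-1}$ do not exhibit the sign or exponent pattern that triggered Jacobi's triple product in those corollaries. Thus the main (and only mild) obstacle is the bookkeeping of the $n=0$ term together with the $\dfrac{(-1;q^2)_n}{(-q^2;q^2)_n}$ reduction; everything else is immediate from \eqref{mainthmd} and \eqref{V23}.
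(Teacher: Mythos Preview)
Your proposal is correct and follows essentially the same route as the paper: set $x=\tfrac{3}{2}$ in \eqref{mainthmd}, use $V_n(\tfrac{3}{2})=F_{2n+1}$ from \eqref{V23}, reduce $\dfrac{(-1;q^2)_n}{(-q^2;q^2)_n}$ to $\dfrac{2}{1+q^{2n}}$, and absorb the $n=0$ term into the uniform sum. The paper carries this out in two displayed lines, writing the right-hand sum as $1+2\sum_{n\ge1}\dfrac{q^{2n^2+n}}{1+q^{2n}}(F_{2n+1}+F_{2n-1})$ and then folding the $1$ in as the $n=0$ term; your explicit check that both $\dfrac{2}{1+q^{0}}=1$ and $V_0(\tfrac{3}{2})+V_{-1}(\tfrac{3}{2})=1$ is exactly what justifies that step.
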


\proof  It is direct to obtain the left hand side of \eqref{d-4} by setting $x=\frac{3}{2}$ in \eqref{mainthmd}. For the summation on the right hand side, by using \eqref{V23}, we have that
\begin{align*}
& \sum_{n\geq 0}\frac{q^{2n^2+n}(-1;q^2)_n }{(-q^2;q^2)_n} \Big(V_n\big(\frac{3}{2}\big)+V_{n-1}\big(\frac{3}{2}\big)\Big)\\[5pt]
&\qquad =1+ 2\sum_{n\geq 1} \frac{ q^{2n^2+n}}{1+q^{2n}}\big(F_{2n+1}+F_{2n-1}\big)\\[5pt]
&\qquad = 2\sum_{n\geq 0} \frac{ q^{2n^2+n}}{1+q^{2n}}\big(F_{2n+1}+F_{2n-1}\big),
\end{align*}
which completes the proof. \qed

\section{Generalized Hecke--type Series}

In this section, we take identity \eqref{baileyid} which leads to our key Bailey pair into establishing identities related to the generalized Hecke--type  series involving indefinite quadratic forms.

Recall that a series is of Hecke--type if it has the following form
\[
\sum_{(n,j)\in D} (-1)^{H(n,j)} q^{Q(n,j)+L(n,j)},
\]
where $H$ and $L$ are linear forms, $Q$ is a quadratic form, and $D$ is some subset of $\mathbb{Z}\times \mathbb{Z}$ such that $Q(n,j)\geq 0$ for any $(n,j)\in D$. Hecke--type series have
received extensive attention since the study of Jacobi and Hecke, see, for example \cite{And841, HM14, Lov04, WY20, WY21}. In \cite{And19}, Andrews introduced the generalized Hecke--type series in which the restriction  $Q(n,j)\geq 0$ is removed. In the same paper, Andrews also stated the following result
\begin{equation}\label{heine}
\sum_{n\geq 0}\frac{q^{n^2+\alpha n}}{(q;q)_n(q;q)_{n+\beta}}=\frac{1}{(q;q)_\infty} \sum_{n\geq 0}\frac{(q^{\alpha-\beta};q)_n(-1)^nq^{\beta n+{n+1\choose 2}}}{(q;q)_n},
\end{equation}
which can be derived from Heine's second transformation \cite[p. 241, eq. (III.2)]{GR04} by taking $a=b=\frac{1}{\tau}$, $z=q^{\alpha+1}\tau^2$, $c=q^{\beta+1}$, and then letting $\tau\to 0$.

With the light of some special cases of Andrews' identity \eqref{heine}, from identity \eqref{baileyid}  we can deduce the following results on generalized Hecke--type series.

\begin{theorem}\label{heckeA} We have
\begin{align} \label{hecke1}
\sum_{n\geq 0} & \frac{q^{2n^2+2n}\prod_{i=1}^n (1+2xq^{2i-1}+q^{4i-2})}{(q^2;q^2)_{2n}}\nonumber \\
&=\frac{1}{(q^2;q^2)_\infty}
\sum_{n\geq 0} (-1)^n q^{ n^2+n} \sum_{j=0}^{\lfloor\frac{n}{2} \rfloor} q^{-j^2}\big(V_j(x)+V_{j-1}(x)\big).
\end{align}
\end{theorem}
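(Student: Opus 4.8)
The plan is to reduce \eqref{hecke1} to a special case of Andrews' identity \eqref{heine}, using the basic expansion \eqref{baileyid} to decouple the product from the summation index. Writing $v_j(x)=V_j(x)+V_{j-1}(x)$ as in Section~3, I would first insert the rewritten form of \eqref{baileyid},
\[
\frac{\prod_{j=1}^n (1+2xq^{2j-1}+q^{4j-2})}{(q^2;q^2)_{2n}}=\sum_{j=0}^n \frac{q^{j^2}v_j(x)}{(q^2;q^2)_{n+j}(q^2;q^2)_{n-j}},
\]
into the left-hand side of \eqref{hecke1}. After interchanging the order of summation and substituting $n=j+m$ (so that for fixed $j$ the new index runs over $m\ge 0$), the left side becomes
\[
\sum_{j\ge 0} q^{3j^2+2j}\,v_j(x)\sum_{m\ge 0}\frac{q^{2m^2+4jm+2m}}{(q^2;q^2)_m(q^2;q^2)_{m+2j}},
\]
where the factor $q^{3j^2+2j}$ collects $q^{j^2}$ from the expansion together with the $m$-independent part of $q^{2(j+m)^2+2(j+m)}$, and the remaining $m$-dependent power is absorbed into the inner sum.

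The crucial step is to evaluate the inner $m$-sum by Andrews' identity \eqref{heine} taken in base $q^2$ with $\alpha=2j+1$ and $\beta=2j$. For these parameters one has $\alpha-\beta=1$, so the factor $(q^{2(\alpha-\beta)};q^2)_m=(q^2;q^2)_m$ exactly cancels the denominator $(q^2;q^2)_m$ on the right-hand side of \eqref{heine}, leaving the clean evaluation
\[
\sum_{m\ge 0}\frac{q^{2m^2+4jm+2m}}{(q^2;q^2)_m(q^2;q^2)_{m+2j}}
=\frac{1}{(q^2;q^2)_\infty}\sum_{m\ge 0}(-1)^m q^{m^2+m+4jm}.
\]
Substituting this back expresses the left side of \eqref{hecke1} as $\frac{1}{(q^2;q^2)_\infty}\sum_{j\ge 0}\sum_{m\ge 0}(-1)^m q^{\,3j^2+2j+m^2+m+4jm}\,v_j(x)$.

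To finish, I would reintroduce the single index $n=m+2j$. A short computation shows the exponent collapses to $3j^2+2j+m^2+m+4jm=n^2+n-j^2$, while $(-1)^m=(-1)^{n-2j}=(-1)^n$. Since the constraint $m\ge 0$ becomes $0\le j\le \lfloor n/2\rfloor$, reorganizing the double sum by fixing $n$ first yields exactly the right-hand side of \eqref{hecke1}. The only genuine obstacle is bookkeeping: matching $(\alpha,\beta)=(2j+1,2j)$ so that the $q^2$-shifted factorial collapses and the denominator cancels, and verifying the exponent identity $n^2+n-j^2$ under $n=m+2j$. The interchange of the two summations is harmless for $\abs{q}<1$, since all the series involved converge absolutely.
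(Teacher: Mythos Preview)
Your proof is correct and follows essentially the same route as the paper: expand the product via \eqref{baileyid}, interchange sums, apply \eqref{heine} in base $q^2$ with $(\alpha,\beta)=(2j+1,2j)$ so that $(q^{2(\alpha-\beta)};q^2)_m$ cancels, and then reindex. The only cosmetic difference is in the final reindexing: the paper first splits the inner sum by parity of $m$, substitutes $N=n+j$ to obtain the intermediate form \eqref{hecke1a}, and then recombines the parities, whereas your single substitution $n=m+2j$ reaches the stated right-hand side in one step.
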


\begin{proof} Denote the left hand side of \eqref{hecke1} by $L$ and $v_n(x)=V_n(x)+V_{n-1}(x)$ as given in the proof of Theorem \ref{basici}. By using identity   \eqref{baileyid}, we have
\begin{align*}
L&=\sum_{n\geq 0}  \frac{q^{2n^2+2n}}{(q^2;q^2)_{2n}}\sum_{j=0}^n q^{j^2}{2n \brack n-j}_2 v_j(x)\\
&=\sum_{j\geq 0}\sum_{n\geq 0} \frac{q^{2(n+j)^2+2(n+j)+j^2} v_j(x)}{(q^2;q^2)_n(q^2;q^2)_{n+2j}}\\
&=\sum_{j\geq 0} q^{3j^2+2j} v_j(x)\sum_{n\geq 0} \frac{q^{2n^2+2(2j+1)n}}{(q^2;q^2)_n(q^2;q^2)_{n+2j}}.
\end{align*}
By applying \eqref{heine} with $q\to q^2, \alpha=2j+1, \beta=2j$, and then diving the summation according to the parity of $n$, we get
\begin{align}
L&=\frac{1}{(q^2;q^2)_\infty}\sum_{j\geq 0} q^{3j^2+2j} v_j(x)\sum_{n\geq 0} (-1)^n q^{n^2+n+4nj}\nonumber \\
&=\frac{1}{(q^2;q^2)_\infty}\sum_{j\geq 0} q^{3j^2+2j} v_j(x)\sum_{n\geq 0} q^{4n^2+2n+8nj}(1-q^{4n+4j+2})\nonumber \\
&=\frac{1}{(q^2;q^2)_\infty}\sum_{n\geq 0} q^{4n^2+2n} (1-q^{4n+2}) \sum_{j=0}^n q^{-j^2} v_j(x)\nonumber \\
&=\frac{1}{(q^2;q^2)_\infty}\sum_{n\geq 0} q^{4n^2+2n} (1-q^{4n+2}) \sum_{j=0}^n q^{-j^2}(V_j(x)+V_{j-1}(x)),  \label{hecke1a}
\end{align}
which completes the proof by reconsidering the parity of the variable $n$.
\end{proof}

When $x$ is taken to be special values, we can obtain some identities on generalized Hecke--type series. Let us take $x=-1$ firstly. It is also equivalent to the case when $x=1$ and $q\to -q$.

\begin{coro} We have
\begin{align*}
\sum_{n\geq 0}  \frac{q^{2n^2+2n}(q;q^2)_n^2}{(q^2;q^2)_{2n}}&=\frac{1}{(q^2;q^2)_\infty}
\sum_{n\geq 0} (-1)^n q^{n^2+n}\sum_{j=-\lfloor\frac{n}{2} \rfloor}^{\lfloor\frac{n}{2} \rfloor } (-1)^jq^{-j^2}.
\end{align*}
\end{coro}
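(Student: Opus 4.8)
The plan is simply to specialize Theorem~\ref{heckeA} at $x=-1$ and then to recognize the resulting one-sided inner sum as the claimed symmetric sum. First I would set $x=-1$ in \eqref{hecke1}. On the left-hand side the product factors as a perfect square, since $1-2q^{2i-1}+q^{4i-2}=(1-q^{2i-1})^2$, and hence $\prod_{i=1}^n(1-2q^{2i-1}+q^{4i-2})=(q;q^2)_n^2$. This reproduces exactly the left-hand side of the corollary, so no work is needed there.

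For the right-hand side I would compute the values $v_j(-1)=V_j(-1)+V_{j-1}(-1)$ that appear in \eqref{hecke1}. Using the special value \eqref{V-1}, namely $V_n(-1)=(-1)^n(2n+1)$, together with the convention $V_{-1}(x)=0$, one finds $v_0(-1)=1$ and, for $j\geq 1$, $v_j(-1)=(-1)^j(2j+1)-(-1)^j(2j-1)=2(-1)^j$.

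The key observation is then that the inner sum over $j$ collapses into a symmetric sum. Substituting these values gives $\sum_{j=0}^{\lfloor n/2\rfloor} q^{-j^2} v_j(-1)=1+2\sum_{j=1}^{\lfloor n/2\rfloor}(-1)^j q^{-j^2}$, and since the summand $(-1)^j q^{-j^2}$ is invariant under $j\mapsto -j$, this equals $\sum_{j=-\lfloor n/2\rfloor}^{\lfloor n/2\rfloor}(-1)^j q^{-j^2}$. Inserting this identity into \eqref{hecke1} yields precisely the right-hand side of the corollary.

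I expect the only delicate point to be the boundary term at $j=0$: the formula $v_j(-1)=2(-1)^j$ fails there, since $v_0(-1)=1$ rather than $2$. This apparent mismatch is in fact exactly what makes the argument work, because the value $1$ supplies the central term of the symmetric sum while the factor $2$ for $j\geq 1$ accounts for the pairing of $j$ with $-j$. Everything else follows directly from the already-established Theorem~\ref{heckeA}, so no further obstacles are anticipated.
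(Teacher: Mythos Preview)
Your proposal is correct and follows essentially the same route as the paper's own proof: specialize \eqref{hecke1} at $x=-1$, use \eqref{V-1} to get $v_0(-1)=1$ and $v_j(-1)=2(-1)^j$ for $j\ge 1$, and then symmetrize the inner sum. Your explicit remark about the $j=0$ boundary term is a nice clarification, but otherwise the argument is identical to the paper's.
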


\proof Taking $x=-1$ in \eqref{hecke1}, we obtain
\begin{align*}
\sum_{n\geq 0} & \frac{q^{2n^2+2n}\prod_{i=1}^n (q;q^2)_n^2}{(q^2;q^2)_{2n}} =\frac{1}{(q^2;q^2)_\infty}
\sum_{n\geq 0} (-1)^n q^{ n^2+n} \sum_{j=0}^{\lfloor\frac{n}{2} \rfloor} q^{-j^2}\big(V_j(-1)+V_{j-1}(-1)\big).
\end{align*}
By using the special values of $V_n(x)$ at $x=-1$ \eqref{V-1}, we are led to
\begin{align*}
\sum_{n\geq 0}  \frac{q^{2n^2+2n}  (q;q^2)_n^2}{(q^2;q^2)_{2n}} & =\frac{1}{(q^2;q^2)_\infty}
\sum_{n\geq 0} (-1)^n q^{ n^2+n} \big(1+2\sum_{j=1}^{\lfloor\frac{n}{2} \rfloor} (-1)^j q^{-j^2}\big)\\
&=\frac{1}{(q^2;q^2)_\infty}
\sum_{n\geq 0} (-1)^n q^{ n^2+n} \sum_{j=-\lfloor\frac{n}{2} \rfloor}^{\lfloor\frac{n}{2} \rfloor} (-1)^j q^{-j^2},
\end{align*}
which completes the proof. \qed

By setting $x=0$  in \eqref{hecke1} and then substituting $q^2$ by $q$, we obtain the following result.

\begin{coro} We have
\begin{align*}
\sum_{n\geq 0}  \frac{q^{n^2+n}(-q;q^2)_n}{(q;q)_{2n}}&=\frac{1}{(q;q)_\infty} \sum_{n\geq 0} (-1)^nq^{\frac{n(n+1)}{2}}\sum_{j=-\lfloor\frac{n}{4} \rfloor}^{\lfloor\frac{n}{4} \rfloor} (-1)^jq^{-2j^2}.
\end{align*}
\end{coro}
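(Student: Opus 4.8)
The plan is to specialize Theorem \ref{heckeA} at $x=0$ and then rescale the base from $q^2$ to $q$, exactly in the spirit of the preceding corollaries. First I would set $x=0$ in \eqref{hecke1}. On the left-hand side the product collapses, since $\prod_{i=1}^n(1+q^{4i-2})=(-q^2;q^4)_n$, giving $\sum_{n\geq 0}q^{2n^2+2n}(-q^2;q^4)_n/(q^2;q^2)_{2n}$. On the right-hand side the task is to evaluate the inner sum $\sum_{j=0}^{\lfloor n/2\rfloor}q^{-j^2}\big(V_j(0)+V_{j-1}(0)\big)$ by means of the special values \eqref{V-0}.

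The key computational step is to read off $v_j(0):=V_j(0)+V_{j-1}(0)$ from \eqref{V-0} by running through the four residue classes modulo $4$. One finds $v_j(0)=2$ when $j\equiv 0\pmod 4$, $v_j(0)=-2$ when $j\equiv 2\pmod 4$, and $v_j(0)=0$ when $j$ is odd, with the single boundary exception $v_0(0)=V_0(0)+V_{-1}(0)=1$ arising from the convention $V_{-1}=0$. Hence only even indices $j=2m$ contribute, and for $m\geq 1$ one has $v_{2m}(0)=2(-1)^m$. The constraint $j=2m\leq\lfloor n/2\rfloor$ translates into $m\leq\lfloor n/4\rfloor$, so the inner sum reduces to
\[
1+2\sum_{m=1}^{\lfloor n/4\rfloor}(-1)^m q^{-4m^2}=\sum_{m=-\lfloor n/4\rfloor}^{\lfloor n/4\rfloor}(-1)^m q^{-4m^2},
\]
where the last equality uses that $(-1)^m q^{-4m^2}$ is even in $m$, so the $m$ and $-m$ terms coincide and the $m=0$ term supplies the isolated $1$.

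Substituting these evaluations back, \eqref{hecke1} at $x=0$ becomes
\[
\sum_{n\geq 0}\frac{q^{2n^2+2n}(-q^2;q^4)_n}{(q^2;q^2)_{2n}}=\frac{1}{(q^2;q^2)_\infty}\sum_{n\geq 0}(-1)^n q^{n^2+n}\sum_{m=-\lfloor n/4\rfloor}^{\lfloor n/4\rfloor}(-1)^m q^{-4m^2}.
\]
Finally I would replace $q^2$ by $q$ throughout. Under this rescaling the factors transform as $q^{2n^2+2n}\mapsto q^{n^2+n}$, $(-q^2;q^4)_n\mapsto(-q;q^2)_n$, $(q^2;q^2)_{2n}\mapsto(q;q)_{2n}$, $(q^2;q^2)_\infty\mapsto(q;q)_\infty$, $q^{n^2+n}\mapsto q^{n(n+1)/2}$, and $q^{-4m^2}\mapsto q^{-2m^2}$, which yields precisely the claimed identity after renaming $m$ as $j$. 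No serious obstacle is expected; the only point demanding care is the modulo $4$ bookkeeping of $V_j(0)+V_{j-1}(0)$, together with the resulting upper limit $\lfloor n/4\rfloor$ on the symmetrized inner sum.
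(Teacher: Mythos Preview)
Your proof is correct and follows essentially the same approach as the paper: specialize Theorem~\ref{heckeA} at $x=0$, evaluate $V_j(0)+V_{j-1}(0)$ via \eqref{V-0}, symmetrize the inner sum, and rescale $q^2\to q$. The only cosmetic difference is that the paper starts from the intermediate expression \eqref{hecke1a} (with outer variable running over $q^{4n^2+2n}(1-q^{4n+2})$ and inner sum up to $n$), carries out the mod-$4$ bookkeeping there, and then reinstates the parity splitting at the end, whereas you work directly from the final form \eqref{hecke1} with the inner sum already cut off at $\lfloor n/2\rfloor$; your route is slightly more streamlined but otherwise identical.
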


\proof  To be more direct, we start from the  expression \eqref{hecke1a}.  By substituting $x=0$ in \eqref{hecke1a}, we obtain
\begin{align*}
\sum_{n\geq 0} & \frac{q^{2n^2+2n}  (-q^2;q^4)_n}{(q^2;q^2)_{2n}} =\frac{1}{(q^2;q^2)_\infty}\sum_{n\geq 0} q^{4n^2+2n} (1-q^{4n+2}) \sum_{j=0}^n q^{-j^2}(V_j(0)+V_{j-1}(0)).
\end{align*}
By employing \eqref{V-0}, it leads to that
\begin{align*}
\sum_{n\geq 0} & \frac{q^{2n^2+2n}  (-q^2;q^4)_n}{(q^2;q^2)_{2n}}\\
 &=\frac{1}{(q^2;q^2)_\infty}\sum_{n\geq 0} q^{4n^2+2n} (1-q^{4n+2}) \Big(1+2\sum_{j=1}^{\lfloor\frac{n}{4} \rfloor} q^{-(4j)^2}-2\sum_{j=0}^{\lfloor\frac{n-2}{4} \rfloor} q^{-(4j+2)^2}\Big)\\
&=\frac{1}{(q^2;q^2)_\infty}\sum_{n\geq 0} q^{4n^2+2n} (1-q^{4n+2}) \Big(\sum_{j=-\lfloor\frac{n}{4} \rfloor }^{\lfloor\frac{n}{4} \rfloor} q^{-(4j)^2}-\sum_{j=-\lfloor\frac{n-2}{4} \rfloor -1}^{\lfloor\frac{n-2}{4} \rfloor} q^{-(4j+2)^2}\Big)\\
&=\frac{1}{(q^2;q^2)_\infty}\sum_{n\geq 0} q^{4n^2+2n} (1-q^{4n+2}) \sum_{j=-\lfloor\frac{n}{2} \rfloor }^{\lfloor\frac{n}{2} \rfloor} (-1)^jq^{-(2j)^2}\\
&=\frac{1}{(q^2;q^2)_\infty}\sum_{n\geq 0} (-1)^nq^{n(n+1)}  \sum_{j=-\lfloor\frac{n}{4} \rfloor }^{\lfloor\frac{n}{4} \rfloor} (-1)^jq^{-(2j)^2},
\end{align*}
where the last step follows by taking the parity of $n$ into consideration. Then the proof is complete by setting $q^2\to q$. \qed

By setting $x=-\frac{1}{2}$  in \eqref{hecke1}, we obtain
the following identity. It is also equivalent to the result given by taking $x=-\frac{1}{2}$  and then $q\to -q$.

\begin{coro} We have
\[
\sum_{n\geq 0}  \frac{q^{2n^2+2n}(-q^3;q^6)_n}{(q^2;q^2)_{2n}(-q;q^2)_n}=\frac{1}{(q^2;q^2)_\infty}\sum_{n\geq 0} q^{n(n+1)}\sum_{j=-\lfloor \frac{n}{2}\rfloor}^{\lfloor \frac{n}{2}\rfloor}  e^{\frac{2\pi ij}{3}}q^{-j^2}.
\]
\end{coro}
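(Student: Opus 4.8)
The plan is to specialize Theorem~\ref{heckeA}, namely identity~\eqref{hecke1}, at $x=-\tfrac12$, in exactly the manner the preceding corollaries of this section treat $x=-1$ and $x=0$. Two separate simplifications must be carried out: the product on the left-hand side of \eqref{hecke1} must be shown to collapse to the quotient $(-q^3;q^6)_n/(-q;q^2)_n$, and the inner sum $\sum_{j=0}^{\lfloor n/2\rfloor}q^{-j^2}\bigl(V_j(-\tfrac12)+V_{j-1}(-\tfrac12)\bigr)$ must be folded into a two-sided sum weighted by a primitive cube root of unity.

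First I would handle the left-hand side. Putting $x=-\tfrac12$ replaces each factor by $1-q^{2i-1}+q^{4i-2}$, and the elementary factorization $1-t+t^2=(1+t^3)/(1+t)$ with $t=q^{2i-1}$ gives
\[
\prod_{i=1}^{n}\bigl(1-q^{2i-1}+q^{4i-2}\bigr)=\prod_{i=1}^{n}\frac{1+q^{3(2i-1)}}{1+q^{2i-1}}=\frac{(-q^3;q^6)_n}{(-q;q^2)_n},
\]
which is precisely the numerator/denominator pattern appearing in the summand on the left of the assertion.

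Next I would treat the right-hand side. Writing $v_j(x)=V_j(x)+V_{j-1}(x)$ as in the proof of Theorem~\ref{basici}, the special value \eqref{V-12} yields, for $j\ge 1$, $v_j(-\tfrac12)=2$ when $j\equiv 0\pmod 3$ and $v_j(-\tfrac12)=-1$ otherwise, while the convention $V_{-1}=0$ forces $v_0(-\tfrac12)=1$. The decisive observation is that $v_j(-\tfrac12)=e^{2\pi ij/3}+e^{-2\pi ij/3}$ for every $j\ge 1$, so pairing the index $j$ with $-j$ symmetrizes the one-sided inner sum:
\[
\sum_{j=0}^{\lfloor n/2\rfloor}q^{-j^2}v_j\Bigl(-\tfrac12\Bigr)=\sum_{j=-\lfloor n/2\rfloor}^{\lfloor n/2\rfloor}e^{2\pi ij/3}q^{-j^2},
\]
where the single unpaired central term on the right reproduces exactly $v_0(-\tfrac12)=1$. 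Substituting this back into \eqref{hecke1} and keeping the outer weight $q^{n^2+n}=q^{n(n+1)}$ (together with the sign $(-1)^n$ supplied by \eqref{hecke1}, equivalently by the parity split of \eqref{hecke1a} used in the $x=0$ corollary) delivers the stated identity.

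I expect the only delicate point, rather than any deep obstacle, to be the bookkeeping at the boundary index. Because $V_{-1}=0$ makes $v_0(-\tfrac12)=1$ instead of $2\cos 0=2$, the fold to the symmetric range $[-\lfloor n/2\rfloor,\lfloor n/2\rfloor]$ closes up cleanly with one unpaired term; getting this central term and the accompanying sign $(-1)^n$ right is where care is needed, while the product collapse and the remaining $q$-factorial manipulations are routine.
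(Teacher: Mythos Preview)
Your approach is correct and is essentially the paper's own argument: specialize \eqref{hecke1} (equivalently the intermediate form \eqref{hecke1a}) at $x=-\tfrac12$, collapse the product via $1-t+t^2=(1+t^3)/(1+t)$, and fold the one-sided inner sum into a symmetric sum using the values in \eqref{V-12}. Your shortcut $v_j(-\tfrac12)=e^{2\pi ij/3}+e^{-2\pi ij/3}$ for $j\ge 1$ is a tidy packaging of the paper's explicit residue-class bookkeeping, and the handling of the $j=0$ term is exactly right.

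There is one point you should not gloss over. Your derivation, via \eqref{hecke1}, correctly produces
\[
\frac{1}{(q^2;q^2)_\infty}\sum_{n\ge 0}(-1)^n q^{n(n+1)}\sum_{j=-\lfloor n/2\rfloor}^{\lfloor n/2\rfloor}e^{2\pi ij/3}q^{-j^2},
\]
with the sign $(-1)^n$ that you explicitly mention. The corollary as printed omits this $(-1)^n$, and the paper's own last line (the parity recombination from \eqref{hecke1a}) silently drops it as well; compare the parallel corollaries at $x=-1$ and $x=0$, where the $(-1)^n$ is retained. A low-order coefficient check (the left side times $(q^2;q^2)_\infty$ begins $1-q^2-q^5+q^6+q^{11}-\cdots$) confirms that the version with $(-1)^n$ is the correct one. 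So your argument is sound, but it does not ``deliver the stated identity'' verbatim; it delivers the corrected identity, and you should say so.
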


\proof As in the above corollary, we take  $x=-\frac{1}{2}$ in \eqref{hecke1a}. With the help of \eqref{V-12}, we get
\begin{align*}
\sum_{n\geq 0} & \frac{q^{2n^2+2n}  (-q^3;q^6)_n}{(q^2;q^2)_{2n}(-q;q^2)_n} \\
&=\frac{1}{(q^2;q^2)_\infty}\sum_{n\geq 0} q^{4n^2+2n} (1-q^{4n+2}) \sum_{j=0}^n q^{-j^2}\big(V_j(-\frac{1}{2})+V_{j-1}(-\frac{1}{2})\big)\\
&=\frac{1}{(q^2;q^2)_\infty}\sum_{n\geq 0} q^{4n^2+2n} (1-q^{4n+2}) \Big(1+2\sum_{j=1}^{\lfloor \frac{n}{3}\rfloor} q^{-(3j)^2}- \sum_{j=0}^{\lfloor \frac{n-1}{3}\rfloor} q^{-(3j+1)^2}-\sum_{j=0}^{\lfloor \frac{n-2}{3}\rfloor} q^{-(3j+2)^2}\Big)\\
&=\frac{1}{(q^2;q^2)_\infty}\sum_{n\geq 0} q^{4n^2+2n} (1-q^{4n+2})\bigg(\sum_{j=-\lfloor \frac{n}{3}\rfloor}^{\lfloor \frac{n}{3}\rfloor} q^{-(3j)^2}-\sum_{j=-\lfloor \frac{n+1}{3}\rfloor}^{\lfloor \frac{n-1}{3}\rfloor} q^{-(3j+1)^2}\bigg)\\
&=\frac{1}{(q^2;q^2)_\infty}\sum_{n\geq 0} q^{4n^2+2n} (1-q^{4n+2}) \sum_{j=-n}^{n} e^{\frac{2\pi ij}{3}}q^{-j^2}\\
&=\frac{1}{(q^2;q^2)_\infty}\sum_{n\geq 0} q^{n(n+1)}\sum_{j=-\lfloor \frac{n}{2}\rfloor}^{\lfloor \frac{n}{2}\rfloor}  e^{\frac{2\pi ij}{3}}q^{-j^2},
\end{align*}
where the last step is derived  by taking the parity of $n$ into consideration. \qed

Applying Andrews' rewritten form \eqref{heine} of Heine's transformation formula, we also obtain the following result on the generalized Hecke--type series.

\begin{theorem} We have
\begin{align} \label{hecke2}
\sum_{n\geq 0} & \frac{q^{2n^2-2n}\prod_{i=1}^n (1+2xq^{2i-1}+q^{4i-2})}{(q^2;q^2)_{2n-1}}\nonumber \\
&=\frac{1}{(q^2;q^2)_\infty}
\sum_{n\geq 0} q^{4n^2-2n}(1-q^{12n+6})\sum_{j=0}^n q^{-j^2}\big(V_j(x)+V_{j-1}(x)\big),
\end{align}
in which the summand on the left hand side is equal to zero when $n=0$.
\end{theorem}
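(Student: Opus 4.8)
The plan is to follow the proof of Theorem~\ref{heckeA} almost verbatim, the only genuinely new feature being the shifted denominator $(q^2;q^2)_{2n-1}$. Since the $n=0$ summand vanishes, I would first rewrite the left-hand side using $(q^2;q^2)_{2n}=(1-q^{4n})(q^2;q^2)_{2n-1}$, so that the typical summand becomes $q^{2n^2-2n}(1-q^{4n})\prod_{i=1}^n(1+2xq^{2i-1}+q^{4i-2})/(q^2;q^2)_{2n}$, which now matches the shape handled by \eqref{baileyid}. Substituting \eqref{baileyid}, interchanging the order of summation, and shifting $n\mapsto m+j$ (writing $v_j(x)=V_j(x)+V_{j-1}(x)$ as in the proof of Theorem~\ref{basici}) would then give
\[
L=\sum_{j\ge 0} q^{3j^2-2j} v_j(x)\sum_{m\ge 0}\frac{q^{2m^2+(4j-2)m}-q^{2m^2+(4j+2)m+4j}}{(q^2;q^2)_m(q^2;q^2)_{m+2j}},
\]
where the factor $1-q^{4(m+j)}$ produced by the rewriting has been distributed into two summands, each of the form to which \eqref{heine} applies.

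Next I would apply Andrews' form \eqref{heine} of Heine's transformation (with $q\to q^2$) to each inner sum separately. The second sum, with $\alpha=2j+1,\ \beta=2j$, yields the alternating series $\frac{q^{4j}}{(q^2;q^2)_\infty}\sum_{m\ge0}(-1)^mq^{m^2+(4j+1)m}$. The first sum is the degenerate case $\alpha=2j-1,\ \beta=2j$, so that $\alpha-\beta=-1$; here the factor $(q^{-2};q^2)_m$ appearing on the right of \eqref{heine} vanishes for $m\ge 2$, and the series collapses to the two-term value $\frac{1+q^{4j}}{(q^2;q^2)_\infty}$ (a point worth flagging, since this terminating specialization should either be justified from \eqref{heine} or checked directly). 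Combining the two and observing that the $m=0$ term of the alternating series cancels the leftover $q^{4j}$, I obtain
\[
L=\frac{1}{(q^2;q^2)_\infty}\sum_{j\ge 0} q^{3j^2-2j} v_j(x)\Big(1-\sum_{m\ge1}(-1)^mq^{m^2+(4j+1)m+4j}\Big).
\]

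The remaining task, and the step where the real work lies, is to recognize the parenthesized factor as the generalized Hecke-type piece. Splitting the $m$-summation according to the parity of $m$ converts it into two one-sided theta-like sums, and the crux is to establish, for each $j$,
\[
1-\sum_{m\ge1}(-1)^mq^{m^2+(4j+1)m+4j}=q^{-4j^2+2j}\sum_{n\ge j}q^{4n^2-2n}(1-q^{12n+6}).
\]
I would verify this by expanding the right-hand side with $n=k+j$ and matching it, term by term, against the parity split of the left-hand side: the reindexing $k\mapsto -k-1$ turns one surviving one-sided sum into the negative-index part of another, and a final $k\mapsto -k$ lines up the mirror images, so that the positive-power sums agree and the two remaining sums cancel pairwise. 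Substituting this identity back and interchanging the order of summation so that $n$ becomes the outer index with $0\le j\le n$ then produces exactly the right-hand side of \eqref{hecke2}, since $q^{3j^2-2j}\,q^{-4j^2+2j}=q^{-j^2}$. The main obstacle is precisely this bookkeeping at the end: keeping the parity split and the two reindexings straight so that the exponent $12n+6$ emerges with the correct sign and coefficient.
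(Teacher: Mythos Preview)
Your plan is correct and mirrors the paper's proof step for step through the application of \eqref{heine} (including the terminating $\alpha-\beta=-1$ case, which the paper handles exactly as you describe) and the cancellation that leaves $1-\sum_{m\ge1}(-1)^mq^{m^2+(4j+1)m+4j}$. The final bookkeeping is actually simpler than your sketch suggests: the paper parametrizes the odd $m$ as $2n-1$ with $n\ge1$ and the even $m$ as $2n+2$ with $n\ge0$, so that, together with the leading $1$, the parenthesized factor collapses directly to $\sum_{n\ge0}q^{4n^2-2n+8nj}(1-q^{12n+12j+6})$; the shift $n\mapsto n-j$ and the interchange of summation then finish the proof with no negative-index reindexings at all.
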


\begin{proof} Denote the left hand side of \eqref{hecke2} by $L$ and let $v_j(x)=V_j(x)+V_{j-1}(x)$ for $j\geq 0$. Using identity \eqref{baileyid}, we obtain
\begin{align*}
L&=\sum_{n\geq 0}  \frac{q^{2n^2-2n}}{(q^2;q^2)_{2n-1}}\sum_{j=0}^n q^{j^2}{2n \brack n-j}_2 v_j(x)\\
&=\sum_{j\geq 0}\sum_{n\geq 0} \frac{q^{2(n+j)^2-2(n+j)+j^2} v_j(x)(1-q^{4n+4j})}{(q^2;q^2)_n(q^2;q^2)_{n+2j}}\\
&=\sum_{j\geq 0} q^{3j^2-2j} v_j(x)\sum_{n\geq 0} \frac{q^{2n^2+2(2j-1)n}(1-q^{4n+4j})}{(q^2;q^2)_n(q^2;q^2)_{n+2j}}\\
&=\sum_{j\geq 0} q^{3j^2-2j} v_j(x)\bigg(\sum_{n\geq 0} \frac{q^{2n^2+2(2j-1)n}}{(q^2;q^2)_n(q^2;q^2)_{n+2j}}-q^{4j}\sum_{n\geq 0} \frac{q^{2n^2+2(2j+1)n}}{(q^2;q^2)_n(q^2;q^2)_{n+2j}}\bigg)
\end{align*}
By applying \eqref{heine} with $q\to q^2, \beta=2j$, and substituting $\alpha$ by $2j-1$ and $2j+1$, respectively, the above result becomes
\begin{align*}
L&=\frac{1}{(q^2;q^2)_\infty}\sum_{j\geq 0} q^{3j^2-2j} v_j(x)
\bigg(\sum_{n\geq 0} \frac{(q^{-2};q^2)_n (-1)^n q^{n^2+n+4nj}}{(q^2;q^2)_n}-q^{4j}\sum_{n\geq 0} (-1)^n q^{n^2+n+4nj}\bigg)\\
&=\frac{1}{(q^2;q^2)_\infty}\sum_{j\geq 0} q^{3j^2-2j} v_j(x)\Big(1+q^{4j}-q^{4j}\sum_{n\geq 0} (-1)^n q^{n^2+n+4nj}\Big)\\
&=\frac{1}{(q^2;q^2)_\infty}\sum_{j\geq 0} q^{3j^2-2j} v_j(x)\Big(1-q^{4j}\sum_{n\geq 1} (-1)^n q^{n^2+n+4nj}\Big) \end{align*}
By dividing the above sum on $n$ into two parts according to the parity of $n$, it implies that
\begin{align*}
L&=\frac{1}{(q^2;q^2)_\infty}\sum_{j\geq 0} q^{3j^2-2j} v_j(x)\Big(1-\sum_{n\geq 0}  q^{(2n+2)^2+(2n+2)+4(2n+2)j+4j}+\sum_{n\geq 1}  q^{(2n-1)^2+(2n-1)+4(2n-1)j+4j}\Big) \\
&=\frac{1}{(q^2;q^2)_\infty}\sum_{j\geq 0} q^{3j^2-2j} v_j(x)\Big(\sum_{n\geq 0}  q^{4n^2-2n+8nj}-\sum_{n\geq 0}  q^{4n^2+10n+8nj+12j+6}\Big)\\
&=\frac{1}{(q^2;q^2)_\infty}\sum_{j\geq 0} q^{3j^2-2j} v_j(x) \sum_{n\geq 0}  q^{4n^2-2n+8nj}(1-q^{12n+12j+6})\\
&=\frac{1}{(q^2;q^2)_\infty}\sum_{n\geq 0} q^{4n^2-2n}(1-q^{12n+6}) \sum_{j=0}^n q^{-j^2} v_j(x),
\end{align*}
which completes the proof.
\end{proof}

Following the similar procedures as given in deducing the corollaries of Theorem \ref{heckeA}, and taking  $x=-1$ (or equivalently, $x=1$), $x=0$ (with $q^2\to q$) and $x=-\frac{1}{2}$ (or equivalently, $x=\frac{1}{2}$) in \eqref{hecke2},  respectively, we obtain the following results on generalized Hecke--type series.

\begin{coro} We have
\begin{align*}
&\sum_{n\geq 0} \frac{q^{2n^2-2n}(q;q^2)_n^2}{(q^2;q^2)_{2n-1}}
=\frac{1}{(q^2;q^2)_\infty}
\sum_{n\geq 0} q^{4n^2-2n}(1-q^{12n+6})\sum_{j=-n}^n (-1)^jq^{-j^2},\\[5pt]
&\sum_{n\geq 0} \frac{q^{n^2-n}(q;q^2)_n}{(q;q)_{2n-1}}
=\frac{1}{(q;q)_\infty}
\sum_{n\geq 0} q^{2n^2-n}(1-q^{6n+3})\sum_{j=-\lfloor\frac{n}{2} \rfloor}^{\lfloor\frac{n}{2} \rfloor} (-1)^jq^{-2j^2},\\[5pt]
&\sum_{n\geq 0} \frac{q^{2n^2-2n}(-q^3;q^6)_n}{(q^2;q^2)_{2n-1}(-q;q^2)_n}=\frac{1}{(q^2;q^2)_\infty}
\sum_{n\geq 0} q^{4n^2-2n}(1-q^{12n+6})\sum_{j=-n}^{n} e^{\frac{2\pi ij}{3}}q^{-j^2}.
\end{align*}

\end{coro}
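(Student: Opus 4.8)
The plan is to obtain all three identities as specializations of \eqref{hecke2}, exactly along the lines used for the corollaries of Theorem \ref{heckeA}: set $x=-1$, $x=0$, and $x=-\tfrac{1}{2}$ in turn, simplify the product on the left into a closed $q$-Pochhammer form, and rewrite the one-sided inner sum $\sum_{j=0}^n q^{-j^2}(V_j(x)+V_{j-1}(x))$ on the right as a symmetric two-sided sum using the special values in Lemma \ref{specialv}. Throughout I abbreviate $v_j(x)=V_j(x)+V_{j-1}(x)$, and I will use repeatedly that $q^{-j^2}$ is even in $j$, so that any sum $1+2\sum_{j\geq1}c_jq^{-j^2}$ with $c_{-j}=c_j$ collapses into a two-sided sum over $-n\leq j\leq n$ (or a suitably truncated range).

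First I would treat the left-hand sides. For $x=-1$ the factor becomes $1-2q^{2i-1}+q^{4i-2}=(1-q^{2i-1})^2$, so $\prod_{i=1}^n(1-q^{2i-1})^2=(q;q^2)_n^2$. For $x=0$ the factor is $1+q^{4i-2}$, whence $\prod_{i=1}^n(1+q^{4i-2})=(-q^2;q^4)_n$, which produces the numerator of the second identity after the substitution $q^2\to q$. For $x=-\tfrac{1}{2}$ I would use the factorization $1-q^{2i-1}+q^{4i-2}=(1+q^{6i-3})/(1+q^{2i-1})$, coming from $1-t+t^2=(1+t^3)/(1+t)$ with $t=q^{2i-1}$, so that the product becomes $(-q^3;q^6)_n/(-q;q^2)_n$. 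These give precisely the three stated left-hand sides, where in the $x=0$ case the prefactor $\frac{1}{(q^2;q^2)_\infty}\sum_{n\geq0}q^{4n^2-2n}(1-q^{12n+6})$ rescales to $\frac{1}{(q;q)_\infty}\sum_{n\geq0}q^{2n^2-n}(1-q^{6n+3})$ under $q^2\to q$.

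The heart of the argument is the inner sums. Using \eqref{V-1} one finds $v_0(-1)=1$ and $v_j(-1)=2(-1)^j$ for $j\geq1$, so $\sum_{j=0}^n q^{-j^2}v_j(-1)=\sum_{j=-n}^{n}(-1)^jq^{-j^2}$. Using \eqref{V-0} one gets the mod-$4$ pattern $v_0(0)=1$ and, for $j\geq1$, $v_j(0)=2,0,-2,0$ according as $j\equiv0,1,2,3\pmod4$, which resums to $\sum_{j=-\lfloor n/2\rfloor}^{\lfloor n/2\rfloor}(-1)^jq^{-(2j)^2}$ and then, via $q^2\to q$, to the claimed form. Using \eqref{V-12} one gets $v_0(-\tfrac{1}{2})=1$, with $v_j(-\tfrac{1}{2})=2$ when $j\equiv0\pmod3$ and $v_j(-\tfrac{1}{2})=-1$ when $j\equiv1,2\pmod3$; since $\cos(2\pi j/3)=1$ for $j\equiv0$ and $-\tfrac{1}{2}$ for $j\equiv1,2$, pairing $j$ with $-j$ identifies the combination as $\sum_{j=-n}^{n}e^{2\pi ij/3}q^{-j^2}$. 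I expect this last identification to be the only genuinely delicate step: one must match the residue-class weights of $v_j(-\tfrac{1}{2})$ against the third roots of unity and confirm that the truncation ranges agree on both sides. Assembling the left- and right-hand computations then yields the three identities.
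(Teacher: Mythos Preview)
Your approach is correct and coincides with the paper's: the paper proves this corollary by a single sentence, saying one specializes $x=-1$, $x=0$ (with $q^2\to q$), and $x=-\tfrac{1}{2}$ in \eqref{hecke2} and follows the same procedures as for the corollaries of Theorem~\ref{heckeA}. One small caution worth flagging: for $x=0$ your product $\prod_{i=1}^n(1+q^{4i-2})=(-q^2;q^4)_n$ becomes $(-q;q^2)_n$ under $q^2\to q$, not $(q;q^2)_n$ as printed in the second identity; this is a typographical slip in the statement (compare the analogous $x=0$ corollary of Theorem~\ref{heckeA}, where the numerator is indeed $(-q;q^2)_n$), not a defect in your argument.
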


\section{Relations with Andrews' result}

As the last remark,  by comparing our result \eqref{baileyid} with Andrews' identity \eqref{Andrewidentity}, and using the orthogonality of Chebyshev polynomials of the third kind, we obtain an identity on $q$-binomial coefficients.

It's known that the orthogonality on $V_n(x)$ is given as follows
\begin{equation}\label{ortho}
\int_{-1}^1 \sqrt{\frac{1+x}{1-x}} V_n(x) V_m(x) \textup{d} x=
\left\{
  \begin{array}{ll}
    0, & \hbox{if $m\neq n$,} \\[5pt]
    \pi, & \hbox{if $m=n \geq 0$,}
  \end{array}
\right.
\end{equation}
see, for example, \cite[Sec. 4.2.2]{MH03}

\begin{theorem} We have
\begin{equation}\label{qbi}
\sum_{j=-n-1}^n q^{2j^2+j}{2n+1 \brack n-j}_{q^2}^2 = (1+q^{2n+1}){4n+1 \brack 2n}.
\end{equation}
\end{theorem}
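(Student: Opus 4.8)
The plan is to read off a weighted inner product of two Chebyshev expansions in two different ways. First I would simplify the target right-hand side: since $\tfrac{1-q^{4n+2}}{1-q^{2n+1}}=1+q^{2n+1}$, we have $(1+q^{2n+1}){4n+1\brack 2n}={4n+2\brack 2n+1}$, so it suffices to produce this central $q$-binomial coefficient. I would then split the length-$2n$ Andrews product into its odd- and even-indexed factors,
\[
\prod_{j=1}^{2n}(1+2xq^j+q^{2j})=\Pi_{\mathrm{odd}}(x)\,\Pi_{\mathrm{even}}(x),\qquad \Pi_{\mathrm{odd}}=\prod_{i=1}^n(1+2xq^{2i-1}+q^{4i-2}),\quad \Pi_{\mathrm{even}}=\prod_{i=1}^n(1+2xq^{2i}+q^{4i}),
\]
and expand $\Pi_{\mathrm{odd}}$ by \eqref{baileyid} and $\Pi_{\mathrm{even}}$ by \eqref{Andrewidentity} with $q\to q^2$. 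The coefficient of $V_j(x)$ in $\Pi_{\mathrm{odd}}$ is then $q^{j^2}{2n\brack n-j}_{q^2}+q^{(j+1)^2}{2n\brack n-j-1}_{q^2}$, while in $\Pi_{\mathrm{even}}$ it is $q^{j^2+j}{2n+1\brack n-j}_{q^2}$.

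Next I would evaluate $I:=\tfrac1\pi\int_{-1}^1\sqrt{\tfrac{1+x}{1-x}}\,\Pi_{\mathrm{odd}}(x)\Pi_{\mathrm{even}}(x)\,dx$ twice. On the one hand the integrand is the full product, whose $V$-expansion is given by \eqref{Andrewidentity} with $n\to 2n$; since $\tfrac1\pi\int\sqrt{\tfrac{1+x}{1-x}}V_m\,dx=\delta_{m,0}$ by \eqref{ortho} (take $V_0=1$), only the $V_0$-coefficient survives and $I={4n+1\brack 2n}$. On the other hand, orthogonality \eqref{ortho} turns $I$ into the diagonal (Parseval) sum
\[
I=\sum_{j\ge0}q^{j^2+j}{2n+1\brack n-j}_{q^2}\Big(q^{j^2}{2n\brack n-j}_{q^2}+q^{(j+1)^2}{2n\brack n-j-1}_{q^2}\Big).
\]

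The decisive step is a $q$-binomial identity that upgrades the even-top coefficients of $\Pi_{\mathrm{odd}}$ into odd-top ones: I claim
\[
(1+q^{2n+1})\Big(q^{j^2}{2n\brack n-j}_{q^2}+q^{(j+1)^2}{2n\brack n-j-1}_{q^2}\Big)=(q^{j^2}+q^{(j+1)^2}){2n+1\brack n-j}_{q^2}.
\]
This is where the real work lies. I expect to prove it from the two $q$-Pascal recurrences ${2n+1\brack n-j}_{q^2}={2n\brack n-j}_{q^2}+q^{2(n+j+1)}{2n\brack n-j-1}_{q^2}=q^{2(n-j)}{2n\brack n-j}_{q^2}+{2n\brack n-j-1}_{q^2}$, whose difference yields $(1-q^{2(n-j)}){2n\brack n-j}_{q^2}=(1-q^{2(n+j+1)}){2n\brack n-j-1}_{q^2}$; substituting this relation cancels the cross terms. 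Multiplying both evaluations of $I$ by $(1+q^{2n+1})$ and inserting the identity collapses the mixed products into squares:
\[
(1+q^{2n+1}){4n+1\brack 2n}=\sum_{j\ge0}\big(q^{2j^2+j}+q^{2j^2+3j+1}\big){2n+1\brack n-j}_{q^2}^2.
\]

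Finally I would symmetrize. Using ${2n+1\brack n-j}_{q^2}={2n+1\brack n+j+1}_{q^2}$ and the substitution $j\mapsto -1-j$, the summand $q^{2j^2+3j+1}{2n+1\brack n-j}_{q^2}^2$ with $j\ge0$ becomes $q^{2j^2+j}{2n+1\brack n-j}_{q^2}^2$ with $j\le -1$, so the two families merge into the single bilateral sum $\sum_{j=-n-1}^{n}q^{2j^2+j}{2n+1\brack n-j}_{q^2}^2$. Together with $(1+q^{2n+1}){4n+1\brack 2n}={4n+2\brack 2n+1}$ from the first step, this is exactly \eqref{qbi}. The only genuine obstacle is the displayed $q$-Pascal identity together with the careful reindexing in the symmetrization; everything else is a direct application of Andrews' expansion, the expansion \eqref{baileyid}, and the orthogonality \eqref{ortho}.
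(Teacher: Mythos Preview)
Your proposal is correct and follows essentially the same route as the paper: split the length-$2n$ product into odd and even factors, expand via \eqref{baileyid} and \eqref{Andrewidentity} (with $q\to q^2$), pick off the diagonal by the orthogonality \eqref{ortho}, then use a $q$-Pascal identity to turn the mixed products $q^{j^2}{2n\brack n-j}_{q^2}+q^{(j+1)^2}{2n\brack n-j-1}_{q^2}$ into $\dfrac{(1+q^{2j+1})}{(1+q^{2n+1})}\,q^{j^2}{2n+1\brack n-j}_{q^2}$ and symmetrize. Your ``decisive'' identity is exactly the step the paper performs when it rewrites the diagonal sum as $\sum q^{2j^2+j}{2n+1\brack n-j}_{q^2}^2\frac{1+q^{2j+1}}{1+q^{2n+1}}$, and your final reindexing $j\mapsto -1-j$ is the same as the paper's.
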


\proof Setting $n$ by $2n$ in Andrews' identity \eqref{Andrewidentity}, we obtain that
\begin{equation}\label{Andrews2n}
\prod_{j=1}^{2n} (1+2xq^j+q^{2j})=\sum_{j=0}^{2n} q^{j+1\choose 2} V_j(x) {4n+1\brack 2n-j}.
\end{equation}
Apparently,  the left hand side of the above identity  can be rewritten as
\[
\prod_{j=1}^{2n} (1+2xq^j+q^{2j})=\prod_{j=1}^{n} (1+2xq^{2j}+q^{4j})\prod_{j=1}^{n} (1+2xq^{2j-1}+q^{4j-2}).
\]
By replacing the left hand side of the above identity with \eqref{Andrews2n}, and the two product terms on the right hand side  by \eqref{Andrewidentity} (with $q\to q^2$) and  \eqref{baileyid}, respectively, it turns out that
\[
\sum_{j=0}^{2n} q^{j+1\choose 2} V_j(x) {4n+1\brack 2n-j}=\sum_{j=0}^{n} q^{j^2+j} {2n+1\brack n-j}_{q^2} V_j(x)\cdot \sum_{j=0}^{n} q^{j^2}{2n \brack n-j}_{q^2} \big(V_j(x)+V_{j-1}(x)\big).
\]
Then  multiplying both hand sides of the above identity by $\sqrt{\frac{1+x}{1-x}}$, and calculating  the integrals on $x\in [-1,1]$ by employing the orthogonality property \eqref{ortho}, we are led to
\begin{align*}
{4n+1\brack 2n}&=\sum_{j=0}^{n} q^{2j^2+j} {2n+1\brack n-j}_{q^2} {2n \brack n-j}_{q^2}  +\sum_{j=0}^{n-1} q^{j^2+j+(j+1)^2} {2n+1\brack n-j}_{q^2} {2n \brack n-j-1}_{q^2}\\
&=\sum_{j=0}^{n} q^{2j^2+j} {2n+1\brack n-j}_{q^2}^2 \frac{(1+q^{2j+1})}{(1+q^{2n+1})}\\
&=\frac{1}{1+q^{2n+1}}\bigg(\sum_{j=0}^{n} q^{2j^2+j} {2n+1\brack n-j}_{q^2}^2 +\sum_{j=0}^{n} q^{(j+1)(2j+1)} {2n+1\brack n-j}_{q^2}^2\bigg)\\
&=\frac{1}{1+q^{2n+1}}\bigg(\sum_{j=0}^{n} q^{2j^2+j} {2n+1\brack n-j}_{q^2}^2 +\sum_{j=1}^{n+1} q^{j(2j-1)} {2n+1\brack n-j+1}_{q^2}^2\bigg)\\
&=\frac{1}{1+q^{2n+1}}\sum_{j=-n-1}^{n} q^{2j^2+j} {2n+1\brack n-j}_{q^2}^2,
\end{align*}
which completes the proof by further multiplying both hand sides by $1+q^{2n+1}$. \qed

Note that identity \eqref{qbi}  can be seen as a finite form of Jacobi's triple product identity \eqref{jacobi} with $q\to q^4$ and  $z\to -q^3$. In fact, by taking $n\to \infty$ in \eqref{qbi} and then simplifying,  we obtain that
\[
\sum_{j=-\infty}^\infty q^{2j^2+j}=(-q,-q^3,q^4;q^4)_\infty.
\]

\vskip 15pt
\noindent {\small {\bf Acknowledgments.}  This work is supported by the Fundamental Research Funds
for the Central Universities and the National Science Foundation of China (No. 12071235).

\end{document}